 \newtheorem{theorem}{\bf Theorem}
 \newtheorem{definition}{\bf Definition}
 \newtheorem{note}{\bf Note}
 \newtheorem{result}{\bf Result}
 \newtheorem{remark}{\bf Remark}
 \newtheorem{example}{\bf Example}
\numberwithin{definition}{section} \numberwithin{theorem}{section}
\numberwithin{note}{section} \numberwithin{result}{section}
\numberwithin{corollary}{section} \numberwithin{lemma}{section}
\numberwithin{proposition}{section} \numberwithin{property}{section}
\numberwithin{remark}{section} \numberwithin{example}{section}
\numberwithin{question}{section} \numberwithin{exercise}{section}
\begin{document}
\title{Intuitionistic
Fuzzy Continuity and Uniform Convergence}
\author{Bivas Dinda,\; T. K. Samanta }

\maketitle{Department of Mathematics,Mahishamuri Ramkrishna
Vidyapith,West Bengal,India\\ Department of Mathematics, Uluberia
College,West Bengal, India.
E-mail : bvsdinda@gmail.com,\; mumpu$_{-}$tapas5@yahoo.co.in }
\bigskip
\begin{abstract}A few of the algebraic and topological properties of intutionistic fuzzy continuity and
uniformly intutionistic fuzzy
 continuity are investigated. Also, the concept of uniformly intutionistic fuzzy
 convergence is introduced thereafter a few results on uniformly intutionistic fuzzy convergence are studied.
\end{abstract}
\textbf{Key Words : \;}  Intuitionistic fuzzy norm linear space,
Intuitionistic fuzzy continuity, Cauchy sequence, Uniformly Intuitionistic fuzzy continuity, Uniformly intuitionistic fuzzy convergent.\\
\textbf{2000 Mathematics Subject Classification:} 03F55, 46S40.
\\\\
\section{\textbf{Introduction  \;}} The concept of intuitionistic
fuzzy set, as a generalisation of fuzzy sets \cite{zadeh} was introduced
 by Atanassov \cite{Atanassov}. Intuitionistic fuzzy set is used in
 the process of decision making. Cheng and Moderson \cite{Shih-chuan} introduced the idea of fuzzy norm on a linear space. Bag and Samanta \cite{Bag1} deduce the definition of fuzzy norm whose associated matric is same as the associated metric of Cheng and Moderson \cite{Shih-chuan}.  \\ \\ In this paper after an introduction of intuitionistic fuzzy norm \cite{Samanta} and intuitionistic fuzzy continuity \cite{Samanta} deduced from Bag and Samanta \cite{Bag1} and \cite{Bag2}, it has been shown that the class of intuitionistic fuzzy continuous functions is closed with respect addition,  multiplication, scalar multiplication and inverse operation of multiplication.
 Also, the intuitionistic fuzzy continuity is being characterized
 by open set and a few properties of open sets are also proved in intutionistic fuzzy normed linear space. Thereafter the concept of uniformly intuitionistic
fuzzy continuity is introduced and it is proved that the uniformly intuitionistic
fuzzy continuity implies the intuitionistic fuzzy continuity but not the converse.
\\In the last section, the concept of intuitionistic fuzzy convergence and uniformly intutionistic fuzzy convergence of a sequence of functions are introduced in intutionistic fuzzy normed linear space  and then it is proved that the intuitionistic fuzzy continuity of each term of a sequence of function is transmitted to the limit function under uniformly intutionistic fuzzy convergence of the sequence of functions.
\bigskip
%%%%%%%%%%%%%%%%% Main Body %%%%%%%%%%%%%%%%%%%%%%%%%%%%%%%%%%%%%%%%%%%%%%%%
\bigskip
\section{\textbf{Preliminaries \;} }
 We quote some definitions and statements of a few theorems
which will be needed in the sequel.
\bigskip
\begin{definition} \cite{Schweizer}.
A binary operation \, $\ast \; : \; [\,0 \; , \;
1\,] \; \times \; [\,0 \; , \; 1\,] \;\, \longrightarrow \;\, [\,0
\; , \; 1\,]$ \, is continuous \, $t$ - norm if \,$\ast$\, satisfies
the
following conditions \, $:$ \\
$(\,i\,)$ \hspace{0.5cm} $\ast$ \, is commutative and associative ,
\\ $(\,ii\,)$ \hspace{0.4cm} $\ast$ \, is continuous , \\
$(\,iii\,)$ \hspace{0.2cm} $a \;\ast\;1 \;\,=\;\, a \hspace{1.2cm}
\forall \;\; a \;\; \varepsilon \;\; [\,0 \;,\; 1\,]$ , \\
$(\,iv\,)$ \hspace{0.2cm} $a \;\ast\; b \;\, \leq \;\, c \;\ast\; d$
\, whenever \, $a \;\leq\; c$  ,  $b \;\leq\; d$  and  $a \,
, \, b \, , \, c \, , \, d \;\, \varepsilon \;\;[\,0 \;,\; 1\,]$.
\end{definition}
\smallskip
\begin{definition}
\cite{Schweizer}. A binary operation \, $\diamond \; : \; [\,0 \; ,
\; 1\,] \; \times \; [\,0 \; , \; 1\,] \;\, \longrightarrow \;\,
[\,0 \; , \; 1\,]$ \, is continuous \, $t$-conorm if
\,$\diamond$\, satisfies the
following conditions \, $:$ \\
$(\,i\,)\;\;$ \hspace{0.1cm} $\diamond$ \, is commutative and
associative ,
\\ $(\,ii\,)\;$ \hspace{0.1cm} $\diamond$ \, is continuous , \\
$(\,iii\,)$ \hspace{0.1cm} $a \;\diamond\;0 \;\,=\;\, a
\hspace{1.2cm}
\forall \;\; a \;\; \in\;\; [\,0 \;,\; 1\,]$ , \\
$(\,iv\,)$ \hspace{0.1cm} $a \;\diamond\; b \;\, \leq \;\, c
\;\diamond\; d$ \, whenever \, $a \;\leq\; c$  ,  $b \;\leq\; d$
 and  $a \, , \, b \, , \, c \, , \, d \;\; \in\;\;[\,0
\;,\; 1\,]$.
\end{definition}
\medskip
\begin{remark}
\cite{Vijayabalaji}. $(\,a\,)$ \; For any \, $r_{\,1} \; , \; r_{\,2}
\;\; \in\;\; (\,0 \;,\; 1\,)$ \, with \, $r_{\,1} \;>\;
r_{\,2}$ , there exist $r_{\,3} \; , \; r_{\,4} \;\; \in
\;\; (\,0 \;,\; 1\,)$ \, such that \, $r_{\,1} \;\ast\; r_{\;3}
\;>\; r_{\,2}$ \, and \, $r_{\,1} \;>\; r_{\,4} \;\diamond\;
r_{\,2}$ .
\\\\ $(\,b\,)$ \; For any \, $r_{\,5} \;\,
\in\;\, (\,0 \;,\; 1\,)$ , there exist \, $r_{\,6} \; , \;
r_{\,7} \;\, \in\;\, (\,0 \;,\; 1\,)$ \, such that \,
$r_{\,6} \;\ast\; r_{\,6} \;\geq\; r_{\,5}$ \,and\, $r_{\,7}
\;\diamond\; r_{\,7} \;\leq\; r_{\,5}.$
\end{remark}
\smallskip
\begin{definition}
\cite{Samanta}. Let \,$\ast$\, be a continuous \,$t$-norm ,
\,$\diamond$\, be a continuous \,$t$ - conorm  and \,$V$\, be a
linear space over the field \,$F \;(\, = \; \mathbb{R} \;\, or \;\,
\mathbb{C} \;)$. An \textbf{intuitionistic fuzzy norm} on \,$V$\,
is an object of the form \, $A \;\,=\;\, \{\; (\,(\,x \;,\; t\,)
\;,\; \mu\,(\,x \;,\; t\,) \;,\; \nu\,(\,x \;,\; t\,) \;) \;\, :
\;\, (\,x \;,\; t\,) \;\,\in\;\, V \;\times\;
\mathbb{R^{\,+}} \;\}$ , where $\mu \,,\, \nu\;are\; fuzzy\; sets
\;on \,$V$  \;\times\; \mathbb{R^{\,+}}$ , \,$\mu$\, denotes the
degree of membership and \,$\nu$\, denotes the degree of non -
membership \,$(\,x \;,\; t\,) \;\,\in\;\, V \;\times\;
\mathbb{R^{\,+}}$\, satisfying the following conditions $:$ \\\\
$(\,i\,)$ \hspace{0.28cm}  $\mu\,(\,x \;,\; t\,) \;+\; \nu\,(\,x
\;,\; t\,) \;\,\leq\;\, 1 \hspace{1.2cm} \forall \;\; (\,x \;,\;
t\,)
\;\,\in\;\, V \;\times\; \mathbb{R^{\,+}}\, ;$ \\
$(\,ii\,)$ \hspace{0.28cm}$\mu\,(\,x \;,\; t\,) \;\,>\;\, 0 \, ;$ \\
$(\,iii\,)$ \hspace{0.09cm} $\mu\,(\,x \;,\; t\,) \;\,=\;\, 1$ \; if
and only if \, $x \;=\; \theta \, ;$ \\
$(\,iv\,)$\hspace{0.28cm} $\mu\,(\,c\,x \;,\; t\,) \;\,=\;\,
\mu\,(\,x \;,\; \frac{t}{|\,c\,|}\,)$ \; \; $\;\forall\; c
\;\,\in\;\, F \, $ and $c \;\neq\; 0 \;;$ \\ $(\,v\,)$ \hspace{0.28cm} $\mu\,(\,x
\;,\; s\,) \;\ast\; \mu\,(\,y \;,\; t\,) \;\,\leq\;\, \mu\,(\,x
\;+\; y \;,\; s \;+\; t\,) \, ;$ \\ $(\,vi\,)$ \hspace{0.1cm}
$\mu\,(\,x \;,\; \cdot\,)$ is non-decreasing function of \,
$\mathbb{R^{\,+}}$ \,and\, $\mathop {\lim }\limits_{t\;\, \to
\,\;\infty } \;\,\,\mu\,\left( {\;x\;,\;t\,} \right)=1 ;$
\\ $(\,vii\,)$ \hspace{0.37cm}$\nu\,(\,x \;,\; t\,) \;\,<\;\, 1 \, ;$ \\
$(\,viii\,)$ \hspace{0.1cm} $\nu\,(\,x \;,\; t\,) \;\,=\;\, 0$ \; if
and only if \, $x \;=\; \theta \, ;$ \\ $(\,ix\,)$
\hspace{0.28cm} $\nu\,(\,c\,x \;,\; t\,) \;\,=\;\, \nu\,(\,x \;,\;
\frac{t}{|\,c\,|}\,)$ \; \; $\;\forall\; c
\;\,\in\;\, F \, $ and $c \;\neq\; 0 \;;$ \\ $(\,x\,)$ \hspace{0.28cm} $\nu\,(\,x
\;,\; s\,) \;\diamond\; \nu\,(\,y \;,\; t\,) \;\,\geq\;\, \nu\,(\,x
\;+\; y \;,\; s \;+\; t\,) \, ;$ \\ $(\,xi\,)$ \hspace{0.1cm}
$\nu\,(\,x \;,\; \cdot\,)$ is non-increasing function of \,
$\mathbb{R^{\,+}}$ \,and\, $\mathop {\lim }\limits_{t\;\, \to
\,\;\infty } \;\,\,\nu\,\left( {\;x\;,\;t\,} \right)=0.$
\end{definition}
\smallskip
\begin{definition}
\cite{Samanta}. If $A$ is an intuitionistic fuzzy norm on a linear
space $V$ then $(V\;,\;A)$ is called an intuitionistic fuzzy normed
linear space.
\end{definition}
\smallskip
 For the intuitionistic fuzzy normed linear space \,$(\,V \;,\; A\,)$\,,
 we further assume that \;$\mu,\, \nu,\, \ast,\, \diamond$\,
 satisfy the following axioms : \\\\
$(\,xii\,)$\;\;\;\;\; $\left. {{}_{a\;\; \ast \;\;a\;\; =
\;\;a}^{a\;\; \diamond \;\;a\;\; = \;\;a} \;\;}
\right\}\;\;\;$\,\;\;\;\;\,,\;\;\;forall
$\;\;a\;\; \varepsilon \;\;[\,0\;\,,\;\,1\,].$ \\
$(\,xiii\,)$ \;\; $\mu\,(\,x \;,\; t\,) \;>\; 0 \;\;\;\;,$\; for all$
\;\; t \;>\; 0 \;\; \Rightarrow \;\; x \;=\;\theta\;.$ \\
$(\,xiv\,)$\;\;\, $\nu\,(\,x \;,\; t\,) \;<\; 1 \;\;\;\;\;\; ,$ \;for all$
\;\; t \;>\; 0 \;\; \Rightarrow \;\; x \;=\; \theta\;.$ \\

\smallskip
\begin{definition}
\cite{Samanta}. A sequence $\{x_n\}_n$ in an intuitionistic fuzzy normed linear space $(V\,,\,A)$ is said to \textbf{converge} to $x\;\in\;V$ if for given $r>0,\;t>0,\;0<r<1$, there exist an integer $n_0\;\in\;\mathbb{N}$ such that \\
$\;\mu\,(\,x_n\,-\,x\,,\,t\,)\;>\;1\,-\,r$
 \;\;and\;\; $\nu\,(\,x_n\,-\,x\,,\,t\,)\;<\;r$ \;\;for all $n\;\geq \;n_0$.
\end{definition}
\smallskip
\begin{definition}
\cite{Samanta}. A sequence $\{x_n\}_n$ in an intuitionistic fuzzy normed linear space $(V\,,\,A)$ is said to be \textbf{cauchy sequence} if $\mathop {\lim }\limits_{n\;\,
\to \,\;\infty } \;\,\,\mu(x_{n+p}-x_n ,t)=1\; $ and $\mathop {\lim }\limits_{n\;\,
\to \,\;\infty } \;\,\,\nu(x_{n+p}-x_n ,t)=0\;\;,\;p=1,2,3,..... $
\end{definition}
\smallskip
\begin{definition}
\cite{Samanta}. Let, $(\;U\;,\;A\;)$ and $(\;V\;,\;B\;)$ be two
intuitionistic fuzzy normed linear space over the same field $F$. A
mapping $f$ from $(\;U\;,\;A\;)$ to $(\;V\;,\;B\;)$ is said to be \textbf{
intuitionistic fuzzy continuous} at $x_0\;\in\;U$, if for any given
$\epsilon\;>\;0\;,\alpha\;\in\;(0,1)\;,\exists\;\delta
\;=\delta(\alpha,\epsilon)\;>0\;,\beta\;=\beta(\alpha,\epsilon)\;\in\;(0,1)$
such that for all $x\;\in\;U$,
\[\mu_U(x-x_0 \;,\;\delta)\;>\;1-\beta\;\Rightarrow\;
\mu_V(f(x)-f(x_0) \;,\;\epsilon)\;>\;1-\alpha\] \[
\nu_U(x-x_0 \;,\;\delta)\;<\;\beta\;\Rightarrow\;
\nu_V(f(x)-f(x_0) \;,\;\epsilon)\;<\;\alpha \]
\end{definition}
\smallskip
\begin{definition}
\cite{Samanta}. A mapping $f$ from $(U\,,\,A)$ to $(V\,,\,B)$ is said to be \textbf{sequentially intuitionistic fuzzy continuous} at $x_0\;\in\;U$, if for any sequence $\{x_n\}_n$, $x_n\;\in\;U\;,\;\forall\;n\;\in\;\mathbb{N}$ with $x_n\;\rightarrow\;x_0$ in $(U\,,\,A)$ implies
$f(x_n)\;\rightarrow\;f(x_0)$ in $(V\,,\,B)$, that is
\[\mathop {\lim }\limits_{n\;\,
\to \,\;\infty } \;\mu_U(x_n-x_0 \,,\,t)\;=\;1 \;and \; \mathop {\lim }\limits_{n\;\,
\to \,\;\infty } \;\nu_U(x_n-x_0 \,,\,t)\;=\;0\; \] \[\Rightarrow\;\mathop {\lim }\limits_{n\;\,
\to \,\;\infty } \;\mu_{\,V}(f(x_n)-f(x_0)\,,\,t)\;=\;1 \; and \;
\mathop {\lim }\limits_{n\;\,
\to \,\;\infty } \;\nu_{\,V}(f(x_n)-f(x_0)\,,\,t)\;=\;0\]
\end{definition}
\smallskip
\begin{theorem}
\cite{Samanta}. Let, $f$ be a mapping from $(U\,,\,A)$ to $(V\,,\,B)$.
Then $f$ is intuitionistic fuzzy continuous on $U$ if and only if
it is sequentially intuitionistic fuzzy continuous on $U$
\end{theorem}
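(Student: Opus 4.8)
The plan is to prove the two implications separately, the forward one directly and its converse by contraposition.

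\textbf{Continuity $\Rightarrow$ sequential continuity.} Assume $f$ is intuitionistic fuzzy continuous at an arbitrary $x_0\in U$ and let $\{x_n\}_n$ be a sequence in $U$ with $x_n\to x_0$. Fix $t>0$ and $\alpha\in(0,1)$. Definition 2.7 supplies $\delta>0$ and $\beta\in(0,1)$ for which both implications hold for every $x\in U$. Since $x_n\to x_0$, Definition 2.5 (used with $r=\beta$ and radius $\delta$) gives an $n_0$ such that $\mu_U(x_n-x_0,\delta)>1-\beta$ and $\nu_U(x_n-x_0,\delta)<\beta$ for all $n\ge n_0$; inserting these into the two implications yields $\mu_V(f(x_n)-f(x_0),t)>1-\alpha$ and $\nu_V(f(x_n)-f(x_0),t)<\alpha$ for all $n\ge n_0$. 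As $\alpha\in(0,1)$ was arbitrary, this says exactly that $\lim_n\mu_V(f(x_n)-f(x_0),t)=1$ and $\lim_n\nu_V(f(x_n)-f(x_0),t)=0$, so $f$ is sequentially intuitionistic fuzzy continuous at $x_0$.

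\textbf{Sequential continuity $\Rightarrow$ continuity.} I would argue by contraposition. If $f$ is not intuitionistic fuzzy continuous at some $x_0$, then negating Definition 2.7 produces $\epsilon_0>0$ and $\alpha_0\in(0,1)$ such that for every $\delta>0$ and every $\beta\in(0,1)$ there is an $x\in U$ at which at least one implication fails; that is, either $\mu_U(x-x_0,\delta)>1-\beta$ together with $\mu_V(f(x)-f(x_0),\epsilon_0)\le 1-\alpha_0$, or $\nu_U(x-x_0,\delta)<\beta$ together with $\nu_V(f(x)-f(x_0),\epsilon_0)\ge\alpha_0$. Choosing $\delta=\beta=\tfrac1n$ for $n\ge 2$, I extract witnesses $x_n$ and then pass to a subsequence on which the same alternative occurs for every $n$.

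\textbf{The limit check and the main obstacle.} On the membership branch one has $\mu_U(x_n-x_0,\tfrac1n)>1-\tfrac1n$, so for each fixed $t>0$ the monotonicity of $\mu_U(x_n-x_0,\cdot)$ gives $\mu_U(x_n-x_0,t)\ge\mu_U(x_n-x_0,\tfrac1n)>1-\tfrac1n\to 1$ as soon as $\tfrac1n<t$, and condition $(i)$ of Definition 2.3 then forces $\nu_U(x_n-x_0,t)\le 1-\mu_U(x_n-x_0,t)\to 0$; hence $x_n\to x_0$, while $\mu_V(f(x_n)-f(x_0),\epsilon_0)\le 1-\alpha_0<1$ for all $n$ prevents $f(x_n)\to f(x_0)$, contradicting sequential continuity. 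The step I expect to be the main obstacle is the non-membership branch: there $\nu_U(x_n-x_0,\tfrac1n)<\tfrac1n$ gives $\nu_U(x_n-x_0,t)\to 0$ by monotonicity, but condition $(i)$ gives no lower bound on $\mu_U$, so $x_n\to x_0$ is not automatic. To close this gap I would show that convergence in $(U,A)$ is in fact governed by $\mu$ alone — since $\mu_U(x_n-x_0,t)\to1$ already forces $\nu_U(x_n-x_0,t)\to0$ by $(i)$ — and correspondingly arrange the witnesses to be taken on the membership side, so that the problematic branch never needs to drive the construction. Justifying this reduction rigorously is the crux of the argument.
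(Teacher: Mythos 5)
Your forward direction is correct and is the standard argument (note only that in this paper convergence is Definition 2.6, continuity Definition 2.8, and condition $(i)$ belongs to Definition 2.4; also, the paper itself gives no proof of this theorem — it is quoted from \cite{Samanta} in the Preliminaries, so the comparison below is with the intended argument of that source). The converse, however, has a genuine gap, and it is exactly the one you flagged: your proposed repair does not close it. In the non-membership branch your witnesses satisfy only $\nu_U(x_n-x_0,\frac{1}{n})<\frac{1}{n}$, and since axiom $(i)$ gives only $\mu+\nu\le 1$, this yields no lower bound on $\mu_U(x_n-x_0,t)$; hence $x_n$ need not converge in the sense of Definition 2.6, and sequential continuity is never triggered. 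Your fix — ``arrange the witnesses to be taken on the membership side'' — is not available to you: the negation of the two-implication form of Definition 2.8 asserts, for each pair $(\delta,\beta)$, only that \emph{some} point fails \emph{one} of the two implications; nothing guarantees that a point failing the $\mu$-implication exists, so you cannot choose one. The observation that $\mu_U(x_n-x_0,t)\to 1$ forces $\nu_U(x_n-x_0,t)\to 0$ points the wrong way: it shows the $\mu$-condition suffices for convergence, not that a $\nu$-side failure can be converted into a $\mu$-side one.

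The intended proof reads the definition of continuity as a single implication with conjoined hypothesis and conclusion: if $\mu_U(x-x_0,\delta)>1-\beta$ and $\nu_U(x-x_0,\delta)<\beta$, then $\mu_V(f(x)-f(x_0),\epsilon)>1-\alpha$ and $\nu_V(f(x)-f(x_0),\epsilon)<\alpha$. Negating \emph{that} form with $\delta=\beta=\frac{1}{n}$ produces witnesses $x_n$ satisfying both $\mu_U(x_n-x_0,\frac{1}{n})>1-\frac{1}{n}$ and $\nu_U(x_n-x_0,\frac{1}{n})<\frac{1}{n}$ — so $x_n\to x_0$ by your own monotonicity computation — while, after passing to a subsequence along one of the two failed conclusions, either $\mu_V(f(x_n)-f(x_0),\epsilon_0)\le 1-\alpha_0$ for all $n$ or $\nu_V(f(x_n)-f(x_0),\epsilon_0)\ge\alpha_0$ for all $n$, contradicting sequential continuity. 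Under that reading your ``membership branch'' is the entire argument and the proof closes. Under the literal two-implication reading you adopted, the converse does not follow from what you wrote, and it is doubtful it can be proved at all without an extra hypothesis tying $\mu$ and $\nu$ together (for instance $\nu=1-\mu$), since $\nu$-smallness alone carries no $\mu$-information in a genuinely intuitionistic norm where $\mu+\nu<1$ is possible.
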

\bigskip
\section{\textbf{Algebra of Intuitionistic Fuzzy Continuous functions.}}
   In this section, consider $(\,U\;,\;A\,)$ and $(\,V\;,\;B\,)$
   be any two intuitionistic fuzzy normed
linear space over the same field $F$.
\\
\begin{theorem}
If $f\;:(\,U\;,\;A\,)\;\rightarrow\;(\,V\;,\;B\,)$ and $g\;:(\,U\;,\;A\,)\;\rightarrow\;(\,V\;,\;B\,)$ are two sequentially
intuitionistic fuzzy continuous functions and $(\,U\;,\;A\,)$
and $(\,V\;,\;B\,)$ satisfies the condition $(xii)$
then $f\,+\,g$ \,,\, $k\;f$, where $k\,\in\,F$ are
also sequentially intuitionistic fuzzy continuous functions
over the same field $F$.
\end{theorem}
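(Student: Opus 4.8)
The plan is to work entirely within the sequential formulation, since both the hypothesis and the conclusion are phrased in terms of sequential intuitionistic fuzzy continuity (and the stated equivalence theorem guarantees this matches the $\epsilon$–$\delta$ notion should it ever be needed). Fix an arbitrary point $x_0\in U$ and an arbitrary sequence $\{x_n\}_n$ in $U$ with $x_n\to x_0$ in $(U,A)$. From the assumed sequential continuity of $f$ and $g$ I immediately obtain $f(x_n)\to f(x_0)$ and $g(x_n)\to g(x_0)$ in $(V,B)$; that is, for every $t>0$,
\[\lim_{n\to\infty}\mu_V(f(x_n)-f(x_0),t)=1,\qquad \lim_{n\to\infty}\mu_V(g(x_n)-g(x_0),t)=1,\]
together with the corresponding $\nu_V$-limits equal to $0$. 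These are the only facts about $f$ and $g$ that I will use.

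For the sum, the idea is to estimate $\mu_V\big((f+g)(x_n)-(f+g)(x_0),t\big)$ by splitting the parameter as $\tfrac{t}{2}+\tfrac{t}{2}$ and applying the triangle-type axiom $(v)$ of the norm $B$, which yields
\[\mu_V\big((f+g)(x_n)-(f+g)(x_0),t\big)\;\ge\;\mu_V\!\big(f(x_n)-f(x_0),\tfrac{t}{2}\big)\ast\mu_V\!\big(g(x_n)-g(x_0),\tfrac{t}{2}\big).\]
Letting $n\to\infty$, both factors tend to $1$; by continuity of $\ast$ and the idempotency $1\ast 1=1$ supplied by condition $(xii)$, the right-hand side tends to $1$, and since $\mu_V\le 1$ always, the left-hand side has limit exactly $1$. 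The dual computation, using axiom $(x)$, the continuity of $\diamond$, and $0\diamond 0=0$, forces the corresponding $\nu_V$-limit to be $0$. As $\{x_n\}_n$ and $x_0$ were arbitrary, $f+g$ is sequentially intuitionistic fuzzy continuous.

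For the scalar multiple $kf$ I would first dispose of the case $k=0$, where $kf$ is the constant map $\theta$, so that $(kf)(x_n)-(kf)(x_0)=\theta$ and continuity is immediate from $\mu_V(\theta,t)=1$ and $\nu_V(\theta,t)=0$. For $k\ne 0$, the homogeneity axioms $(iv)$ and $(ix)$ let me rewrite
\[\mu_V\big(kf(x_n)-kf(x_0),t\big)=\mu_V\!\big(f(x_n)-f(x_0),\tfrac{t}{|k|}\big),\]
and likewise for $\nu_V$. Since $\tfrac{t}{|k|}$ is a fixed positive number, the right-hand side converges to $1$ (and its $\nu_V$-analogue to $0$) as $n\to\infty$ by the sequential continuity of $f$, giving the claim.

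The only genuinely delicate point is the passage to the limit through the $t$-norm and $t$-conorm in the sum case: one must invoke the continuity of $\ast$ and $\diamond$ to move the limit inside the binary operation, and it is precisely here that the standing assumption $(xii)$ (idempotency, hence $1\ast 1=1$ and $0\diamond 0=0$) makes the limiting values come out cleanly. Everything else reduces to the defining axioms of the intuitionistic fuzzy norm together with the hypotheses on $f$ and $g$.
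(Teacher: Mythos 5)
Your proof is correct and follows essentially the same route as the paper's: the same $\tfrac{t}{2}+\tfrac{t}{2}$ split of the parameter via axioms $(v)$ and $(x)$, followed by passing to the limit through the continuous operations $\ast$ and $\diamond$ to get $1\ast 1=1$ and $0\diamond 0=0$. If anything you are more complete than the paper, which dismisses the $k\,f$ case with ``obviously'' while you treat $k=0$ and $k\neq 0$ explicitly via axioms $(iv)$ and $(ix)$; the only quibble is that $1\ast 1=1$ and $0\diamond 0=0$ already follow from the $t$-norm and $t$-conorm axioms $a\ast 1=a$ and $a\diamond 0=a$, so the idempotency condition $(xii)$ is not actually what makes the limits come out, though invoking it as you do is harmless and mirrors the paper's own statement.
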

\begin{proof}
Let, \,$\{{x_n}\}_n$ be a sequence in $U$ such that
$x_n\;\rightarrow\;x$ \,in \,$(\,U\,,\,A\,)$.
Thus $\forall\;t\;\in\;\mathbb{R}$ we have\[\mathop
{\lim }\limits_{n\; \to \;\infty } \;\mu_{\,U} (\,x_{\,n}\,-\,x\,
,\,t\,)\;=\;1\;\;\;\; and\;\;\; \mathop {\lim }\limits_{n\;
\to \;\infty } \;\nu_{\,U} (\,x_{\,n}\,-\,x \,,\,t\,)\;=\;0
\hspace{0.2cm} \cdots
\hspace{0.3cm}(1)\] Since $f$
and $g$ are sequentially intuitionistic fuzzy continuous at $x$ from
(1), we have \[\mathop {\lim }\limits_{n\; \to
\;\infty } \;\mu_{\,V} (f(x_n)-f(x) \,,\, t)\;=\;1\;,\;\mathop
{\lim }\limits_{n\; \to \;\infty } \;\nu_{\,V} (f(x_n)-f(x)
\,,\, t)\;=\;0\;,\;\,\forall\;t\;\in\;\mathbb{R}\]
and \[\mathop {\lim }\limits_{n\; \to \;\infty }
\;\mu_{\,V} (g(x_n)-g(x) \,,\, t)\;=\;1\;\,,\;\,\mathop {\lim
}\limits_{n\; \to \;\infty }\;\nu_{\,V} (g(x_n)-g(x) \,,\, t)\;=\;0\;,\;\;\forall\;t\;\in\;\mathbb{R}\]
\\Now, $\;\;\;\mu_{\,V} (\,(f\,+\,g)(x_n)\;-\;(f\,+\,g)(x)
\,,\, t\,)$ \[=\;\mu_{\,V}(\,f(x_n)\;-\;f(x)\;+\;g(x_n)\;-\;g(x)\,,\,t\,)\;\]
\[\hspace{2.1cm}\geq\;\mu_{\,V}\left(\,f(x_n)\;-\;f(x)\,,\,
\frac{t}{2}\,\right)\;\ast\;
\mu_{\,V}\left(\,g(x_n)\;-\;g(x)\,,\, \frac{t}{2}\,\right)\;\] \\
 Taking limit we have,\\ $ \mathop
{\lim }\limits_{n\;\to\;\infty}\;\mu_{\,V} (\,(f\,+\,g)(x_n)\;-\;(f\,+\,g)(x)
\,,\, t\,)$ \[\hspace{1.5cm}\geq\;\mathop
{\lim }\limits_{n\;\to \;\infty}\mu_{\,V}\left(\,f(x_n)\,-\, f(x)\,,\,
\frac{t}{2}\,\right)\;\ast\;\mathop
{\lim }\limits_{n\;\to \;\infty}\mu_{\,V}
\left(\,g(x_n)\,-\,g(x)\,,\, \frac{t}{2}\,\right)\;=\;1\;\ast\;1\;=\;1.\]
\\Again,$\;\;\;\nu_{\,V} (\,(f\,+\,g)(x_n)\;-\;(f\,+\,g)(x)
\,,\, t\,)$ \[=\;\nu_{\,V}(\,f(x_n)\;-\;f(x)\;+\;g(x_n)\;-\;g(x)\,,\,t\,)\;\]
\[\hspace{2.1cm}\leq\;\nu_{\,V}\left(\,f(x_n)\;-\;f(x)\,,\, \frac{t}{2}\,\right)\;\diamond\;
\nu_{\,V}\left(\,g(x_n)\;-\;g(x)\,,\, \frac{t}{2}\,\right)\;\]
\\ Taking limit we have,\\$ \mathop
{\lim }\limits_{n\;\to\;\infty}\;\nu_{\,V} (\,(f\,+\,g)(x_n)\;-\;(f\,+\,g)(x)
\,,\, t\,)$ \[\hspace{1.5cm}\leq\;\mathop
{\lim }\limits_{n\;\to \;\infty}\nu_{\,V}\left(\,f(x_n)\,-\, f(x)\,,\,
\frac{t}{2}\,\right)\;\diamond\;\mathop
{\lim }\limits_{n\;\to \;\infty}\nu_{\,V}
\left(\,g(x_n)\,-\,g(x)\,,\, \frac{t}{2}\,\right)\;=\;0\;\diamond\;0\;=\;0.\]
\\So, $f\,+\,g$ is sequentially intuitionistic fuzzy
continuous.\\\\Obviously, $k\,f$ is sequentially intuitionistic fuzzy
continuous for every $k\in\;F$.
\end{proof}
\bigskip
\smallskip
 \textbf{We further assume that,} for an intuitionistic fuzzy normed linear space $(\,V\;,\;A\,)$ \,and for\, $x\;\neq\;\theta$,\\
 $(xv)\;\;\;\mu(x\,,\, .)$ is a continuous function of $\mathbb{R}$ and strictly increasing on the subset $\{\, t\;\,:\;\,0\;<\;\mu(x\,,\,t)\;<\;1 \, \}$ of $\mathbb{R}$.\\$(xvi)\;\;\;\nu(x\,,\, .)$ is a continuous function
 of $\mathbb{R}$ and strictly decreasing on the
 subset $\{\,t\;\,:\;\,0\;<\;\nu(x\,,\,t)\;<\;1 \,\}$ of $\mathbb{R}$.

 \medskip

 \begin{theorem}
 If $f\;:(\,U\;,\;A\,)\;\rightarrow\;(\,V\;,\;B\,)$ and $g\;:(\,U\;,\;A\,)\;\rightarrow\;(\,V\;,\;B\,)$
 are two sequentially intuitionistic fuzzy continuous functions
 and $(\,U\;,\;A\,)$ and $(\,V\;,\;B\,)$ satisfies $(xii)$\,,\,
  $(xv)$ and $(xvi)$ then \\$(a)\;\;f\,g\;$ is sequentially
  intuitionistic fuzzy continuous functions over the same field $F$,
  \\$(b)$  if $g(x)\;\neq\;0\;,\;\;\forall\;x\;\in\;U$ then \,
  $\frac{f}{g}$ \, is sequentially intuitionistic fuzzy
  continuous functions over the same field $F$.
 \end{theorem}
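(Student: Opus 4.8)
The plan is to argue sequentially, exactly in the manner of Theorem 3.1. I fix a sequence $\{x_n\}_n$ in $U$ with $x_n\to x$ in $(U,A)$; sequential intuitionistic fuzzy continuity of $f$ and $g$ then records that, for every $t$,
\[\lim_{n\to\infty}\mu_V(f(x_n)-f(x),t)=1,\quad \lim_{n\to\infty}\nu_V(f(x_n)-f(x),t)=0,\]
together with the identical statements for $g$. The goal for $(a)$ is to deduce $\mu_V((fg)(x_n)-(fg)(x),t)\to 1$ and $\nu_V((fg)(x_n)-(fg)(x),t)\to 0$, and similarly for $f/g$ in $(b)$. The only new algebraic tool beyond Theorem 3.1 is that a scalar factor $c$ may be absorbed by axioms $(iv)$ and $(ix)$, which rewrite $\mu_V(cy,t)=\mu_V(y,t/|c|)$ and $\nu_V(cy,t)=\nu_V(y,t/|c|)$.

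For part $(a)$ I would use the product splitting
\[f(x_n)g(x_n)-f(x)g(x)=f(x_n)\big(g(x_n)-g(x)\big)+\big(f(x_n)-f(x)\big)g(x),\]
apply axiom $(v)$ to bound $\mu_V$ of the left side below by the $\ast$-product of the two pieces evaluated at $t/2$, and dually apply axiom $(x)$ for $\nu_V$ with $\diamond$. The second piece is the easy one: by $(iv)$ and $(ix)$ it becomes $\mu_V(f(x_n)-f(x),\,t/(2|g(x)|))$ and $\nu_V(f(x_n)-f(x),\,t/(2|g(x)|))$ (with value $1$, resp.\ $0$, when $g(x)=0$), and since $t/(2|g(x)|)$ is a fixed positive number these tend to $1$ and $0$ respectively.

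The genuine obstacle is the first piece $f(x_n)(g(x_n)-g(x))$, whose coefficient $f(x_n)$ varies with $n$. By $(iv)$, $\mu_V(f(x_n)(g(x_n)-g(x)),t/2)=\mu_V(g(x_n)-g(x),\,t/(2|f(x_n)|))$, and since $\mu_V(y,\cdot)$ is nondecreasing this is at least $\mu_V(g(x_n)-g(x),\,t/(2M))$ whenever $|f(x_n)|\le M$; hence the whole matter reduces to the boundedness of the scalar sequence $\{\,|f(x_n)|\,\}_n$. This is exactly the point at which hypotheses $(xv)$ and $(xvi)$ are needed: their continuity and strict monotonicity let me exclude $|f(x_n)|\to\infty$ along a subsequence, for then $|f(x_n)-f(x)|\to\infty$ and the rescaling in $(iv)$ would drive $\mu_V(f(x_n)-f(x),t)$ toward the value of the continuous strictly increasing function $\mu$ at second arguments tending to $0$, which by $(iii)$ is strictly less than $1$ and so contradicts the recorded limit $\mu_V(f(x_n)-f(x),t)\to 1$. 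With boundedness secured, a limit passage using the continuity of $\ast$ and $\diamond$ gives $1\ast 1=1$ and $0\diamond 0=0$, establishing $(a)$. I expect this boundedness step to be the most delicate part of the argument.

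For part $(b)$ I would first show that $1/g$ is sequentially intuitionistic fuzzy continuous and then write $f/g=f\cdot(1/g)$ to conclude by part $(a)$. For continuity of $1/g$ I use the reciprocal identity and $(iv)$ to get
\[\mu_V\!\left(\tfrac{1}{g(x_n)}-\tfrac{1}{g(x)},\,t\right)=\mu_V\big(g(x_n)-g(x),\,t\,|g(x_n)|\,|g(x)|\big),\]
with the dual formula for $\nu_V$. Since $g(x)\neq 0$ and $g(x_n)\to g(x)$, the regularity furnished by $(xv)$ and $(xvi)$ again supplies a uniform lower bound $|g(x_n)|\ge |g(x)|/2$ for all large $n$; then the second argument exceeds the fixed positive number $t\,|g(x)|^2/2$, so the right side tends to $1$ and its $\nu_V$-analogue to $0$. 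Combined with part $(a)$, this shows $f/g$ is sequentially intuitionistic fuzzy continuous, completing the plan.
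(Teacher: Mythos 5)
Your plan is correct, but it reaches the conclusion by a genuinely different mechanism than the paper. For part $(a)$ the paper does not stop at your two-term splitting: it adds and subtracts once more to get the three-term decomposition $\big(f(x_n)-f(x_0)\big)\big(g(x_n)-g(x_0)\big)+f(x_0)\big(g(x_n)-g(x_0)\big)+g(x_0)\big(f(x_n)-f(x_0)\big)$, splits $t$ into thirds, and handles the double-difference term by absorbing the scalar $|g(x_n)-g(x_0)|\to 0$ into the time slot via axiom $(iv)$, so that the second argument tends to $\infty$ and $\mu_V\to 1$ by $(vi)$, invoking the continuity hypotheses $(xv)$, $(xvi)$ to pass the limit inside; this is precisely how it dodges the varying coefficient $f(x_n)$ that you confront head-on. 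Your alternative --- proving the scalar sequence $\{|f(x_n)|\}_n$ is bounded and then working at the fixed time $t/(2M)$ --- is sound (in the scalar-valued setting that the paper itself tacitly assumes when it writes $|f(x_0)|$, $|g(x_0)|$), and it actually yields a cleaner limit passage: the paper's proof leaves an $n$-dependent first argument stranded outside the limit, ending with expressions like $\mu_V(f(x_n)-f(x_0),\infty)$, whereas your limits are taken at fixed times and justified purely by monotonicity and continuity of $\ast$, $\diamond$. One small correction to your accounting of hypotheses: the boundedness contradiction needs only $(iii)$ (so that $\mu(1,s)<1$ for the nonzero element) together with the monotonicity in $(vi)$; the strict monotonicity and continuity of $(xv)$, $(xvi)$ are what the \emph{paper's} route requires for its limit interchange, and your route also quietly shows that $(xii)$ is dispensable here, since $1\ast 1=1$ and $0\diamond 0=0$ hold for every continuous $t$-norm and $t$-conorm. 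The same contrast recurs in $(b)$: the paper rewrites $\mu_V\big(\frac{1}{g}(x_n)-\frac{1}{g}(x_0),t\big)$ as $\mu_V\big(\frac{1}{g(x_n)g(x_0)},\frac{t}{g(x_n)-g(x_0)}\big)$, pushing the vanishing difference into the denominator of the time argument, while you keep $g(x_n)-g(x)$ in the vector slot and secure the eventual lower bound $|g(x_n)|\geq |g(x)|/2$; both work, and your version again avoids an $n$-dependent vector surviving the limit.
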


 \begin{proof}
 \;(a)\;\,Let,$\;\{\,{x_n}\,\}_n\;$ be a sequence in $U$ such that
$x_n\;\rightarrow\;x\;$ in $(\,U\;,\;A\,)$.
Thus \;$\forall\;t\;\in\;\mathbb{R}$ \;we have\\\[\mathop
{\lim }\limits_{n\; \to \;\infty } \;\mu_{\,U}\, (\,x_n\,-\,x\,
,\,t\,)\;=\;1\;\;\;\; and\;\;\; \mathop {\lim }\limits_{n\;
\to \;\infty } \;\nu_{\,U}\,(\,x_n\,-\,x \,,\,t\,)\;=\;0 \hspace{0.2cm} \cdots
\hspace{0.3cm}(2)\]
\\Since $f$ and $g$ are sequentially intuitionistic fuzzy
continuous at $x$ , from (2), we have
\\ \[\mathop {\lim }\limits_{n\; \to
\;\infty } \;\mu_{\,V}\,(\,f(x_n)\,-\,f(x) \,,\, t\,)\;=\;1\;,\;\mathop
{\lim }\limits_{n\; \to \;\infty } \;\nu_{\,V}\,(\,f(x_n)\,-\,f(x)
\,,\, t\,)\;=\;0\;,\;\,\forall\;t\;\in\;\mathbb{R}\]
and \[\mathop {\lim }\limits_{n\; \to \;\infty }
\;\mu_{\,V}\,(\,g(x_n)\,-\,g(x) \,,\, t\,)\;=\;1\;\,,\;\,\mathop {\lim
}\limits_{n\; \to \;\infty }\;\nu_{\,V}\,(\,g(x_n)\,-\,g(x) \,,\, t\,)\;=\;0\;,\;\;\forall\;t\;\in\;\mathbb{R}\]
\\ Now,\;$\;\mu_{\,V}\,(\,(f\,g)(x_n)\;-\;(f\,g)(x_0)\,,\, t\,)$
\[=\;\mu_{\,V}\,(\,f(x_n)\,(g(x_n)\,-\,g(x_0))
\;+\;g(x_0)\,(\,f(x_n)\,-\,f(x_0)\,)\,,\, t\,)\]
\[ \hspace{1.5cm}=\;\mu_{\,V}\,(\,(\,f(x_n)\,-\,f(x_0)\,)\;
(\,g(x_n)\,-\,g(x_0)\,)\;+\;f(x_0)
\;(\,g(x_n)\,-\,g(x_0)\,)\;\] \[ \hspace{4.5cm}+\;g(x_0)\;(\,f(x_n)
\,-\,f(x_0)\,)\;,\; t)
\] \[ \hspace{1.7cm}\geq\;
\mu_{\,V}\,\left(\,(\,f(x_n)\,-\,f(x_0)\,)\;(\,g(x_n)\,-\,g(x_0)\,)
\,,\,\frac{t}{3}\,\right)\;\ast\;
\mu_{\,V}\,\left(\,f(x_0)\;(\,g(x_n)\,-\,g(x_0)\,)\,,\,\frac{t}{3}\,\right)\;\] \[ \hspace{5.1cm}
\ast\;\mu_{\,V}\,\left(\,g(x_0)\;
(\,f(x_n)\,-\,f(x_0)\,)\,,\,\frac{t}{3}\,\right)\] \[\hspace{1.7cm}=\;\mu_{\,V}\,\left(\,f(x_n)\,-\,f(x_0)\,,\,\frac{t}
{3\mid\,g(x_n)\,-\,g(x_0)\,\mid}\,\right) \;\ast\;\mu_{\,V}\,
\left(\,g(x_n)\,-\,g(x_0)\,,\,
\frac{t}{3\mid\,f(x_0)\,\mid}\right)\]
\[\hspace{5.4cm}\ast\;\mu_{\,V}\,\left(\,f(x_n)\,-\,f(x_0)\,,\,
\frac{t}{3\mid\,g(x_0)\,\mid}\,\right)\]
\\Taking limit as $n\;\rightarrow\;\infty$ we have,\\\\
$\mathop {\lim}\limits_{n\;\to \;\infty }\,
\mu_{\,V}\,(\,(f\,g)(x_n)\,-\,(f\,g)(x_0)\,,\,t\,)$\\
\[ \hspace{1.0cm} \geq\;\mathop {\lim}\limits_{n\;\to \;\infty }\,\mu_{\,V}\,\left(\,f(x_n)\,-\,f(x_0)\,,\,
\frac{t}{3\mid\,g(x_n)\,-\,g(x_0)\,\mid}\,\right)\;\ast\;
\mathop {\lim}\limits_{n\;\to \;\infty }\,\mu_{\,V}\,\left
(\,g(x_n)\,-\,g(x_0)\,,\,
\frac{t}{3\mid\,f(x_0)\,\mid}\,\right)\]
\[\hspace{3.5cm} \ast\;\mathop {\lim}\limits_{n\;\to \;\infty }\,\mu_{\,V}\,\left(\,f(x_n)\,-\,f(x_0)\,,\, \frac{t}{3\mid\,g(x_0)\,\mid}\,\right)\]
\\\[\hspace{1.0cm} =\;\mu_{\,V}\,\left(\,f(x_n)\,-\,f(x_0)\,,\,
\mathop {\lim}\limits_{n\;\to \;\infty }\;\frac{t}{3\mid\,g(x_n)\,-\,g(x_0)\,\mid}\,\right)\;\ast\;
\mathop {\lim}\limits_{n\;\to \;\infty }\,\mu_{\,V}\,\left(\,g(x_n)\,-\,g(x_0)\,,\,
\frac{t}{3\mid\,f(x_0)\,\mid}\,\right)\]
\[\hspace{5.2cm} \ast\;\mathop {\lim}\limits_{n\;\to\;\infty }\,\mu_{\,V}\,\left(\,f(x_n)\,-\,f(x_0) \,,\, \frac{t}{3\mid\,g(x_0)\,\mid}\,\right)\;
\;, \;\;\;by(vii)\]
\[=\;\mu_{\,V}\,(\,f(x_n)\,-\,f(x_0),\infty\,)\;\ast\;1\;\ast\;1\\
=\;1\;\ast\;1\;\ast\;1\\=\;1 \hspace{2.4cm}\]
and\\
$\nu_{\,V}(\,(f\,g)(x_n)\;-\;(f\,g)(x_0)\,,\, t\,)$
\[=\;\nu_{\,V}(\,f(x_n)\,(\,g(x_n)\,-\,g(x_0)\,)
\;+\;g(x_0)\,(\,f(x_n)\,-\,f(x_0)\,)\,,\, t\,)\]
\[ \hspace{1.5cm}=\;\nu_{\,V}(\,(\,f(x_n)\,-\,f(x_0)\,)\;(\,g(x_n)\,-\,g(x_0)\,)\;+\;f(x_0)
\,(\,g(x_n)\,-\,g(x_0)\,)\;\] \[ \hspace{4.5cm}+\;g(x_0)\,(\,f(x_n)\,-\,f(x_0)\,)\;,\; t\,)
\] \[ \hspace{1.5cm}\leq\;
\nu_{\,V}\left(\,(\,f(x_n)\,-\,f(x_0)\,)\;(\,g(x_n)\,-\,g(x_0)\,)\,,
\,\frac{t}{3}\,\right)\;\diamond\;
\nu_{\,V}\left(\,f(x_0)\;(\,g(x_n)\,-\,g(x_0)\,)\,,\,\frac{t}{3}\,\right)\;\] \[ \hspace{5.1cm}
\diamond\;\nu_{\,V}\left(\,g(x_0)\;
(\,f(x_n)\,-\,f(x_0)\,)\,,\,\frac{t}{3}\,\right)\] \[\hspace{1.5cm}=\;\nu_{\,V}\left(\,f(x_n)\,-\,f(x_0)\,,\,\frac{t}
{3\mid\,g(x_n)\,-\,g(x_0)\,\mid}\,\right) \;\diamond\;\nu_{\,V}\left(\,g(x_n)\,-\,g(x_0)\,,\,
\frac{t}{3\mid\,f(x_0)\,\mid}\,\right)\]
\[\hspace{5.4cm}\diamond\;\nu_{\,V}\left(\,f(x_n)\,-\,f(x_0)\,,\,
\frac{t}{3\mid\,g(x_0)\,\mid}\,\right)\]\\\\
Taking limit as
$n\;\rightarrow\;\infty$ we have,\\\\
$\mathop {\lim}\limits_{n\;\to \;\infty }\,
\nu_{\,V}(\,(f\,g)(x_n)\,-\,(f\,g)(x_0)\,,\,t\,)$\\
\[ \hspace{1.0cm} \leq\;\mathop {\lim}\limits_{n\;\to \;\infty }\,\nu_{\,V}\left(\,f(x_n)\,-\,f(x_0)\,,\,
\frac{t}{3\mid\,g(x_n)-g(x_0)\,\mid}\,\right)\;\diamond\;
\mathop {\lim}\limits_{n\;\to \;\infty }\,\nu_{\,V}\left(\,g(x_n)-g(x_0)\,,\,
\frac{t}{3\mid\,f(x_0)\,\mid}\,\right)\]
\[\hspace{3.5cm} \diamond\;\mathop {\lim}\limits_{n\;\to \;\infty }\,\nu_{\,V}\left(\,f(x_n)\,-\,f(x_0)\,,\, \frac{t}{3\mid\,g(x_0)\,\mid}\,\right)\]
\\\[\hspace{1.0cm} =\;\nu_{\,V}\left(\,f(x_n)\,-\,f(x_0)\,,\,
\mathop {\lim}\limits_{n\;\to \;\infty }\;\frac{t}{3\mid\,g(x_n)-g(x_0)\,\mid}\,\right)
\;\diamond\;
\mathop {\lim}\limits_{n\;\to \;\infty }\,\nu_{\,V}\left(g(x_n)\,-\,g(x_0)\,,\,
\frac{t}{3\mid\,f(x_0)\,\mid}\,\right)\]
\[\hspace{5.2cm} \diamond\;\mathop {\lim}\limits_{n\;\to\;\infty }\,\nu_{\,V}\left(\,f(x_n)\,-\,f(x_0) \,,\, \frac{t}{3\mid\,g(x_0)\,\mid}\,\right)\;
\;, \;\;\;by(vii)\]
\[=\;\nu_{\,V}(\,f(x_n)\,-\,f(x_0)\,,\,\infty\,)\;\diamond\;0\;\diamond\;0\\
=\;0\;\diamond\;0\;\diamond\;0\\=\;0 \hspace{2.4cm}\]
Hence the proof.\\\\
\;(b)\;We now show that $\frac{1}{g}$ is
sequentially intuitionistic fuzzy continuous at $x$ if $g(x)\neq\;0$  for all $x\;\in\;U$.\\
\[\mu_{\,V}\left(\,\frac{1}{g}(x_n)\,-\,\frac{1}{g}(x_0)
\,,\, t\,\right)\;=\;\mu_{\,V}\left(\,\frac{g(x_n)\,-\,g(x_0)}
{g(x_n)\,g(x_0)}\;,\;t\,\right)\;
=\;\mu_{\,V}\left(\,\frac{1}{g(x_n)\,g(x_0)}\;,\;\frac{t}
{g(x_n)\,-\,g(x_0)}\,\right)\] \\
Taking limit as $n\;\rightarrow\;\infty$ we have,\\\\
$\mathop {\lim}\limits_{n\;\to\;\infty }
\mu_{\,V}\left(\,\frac{1}{g}(x_n)\;-\;\frac{1}{g}(x_0)\;,\;t\,\right)$\\
\[=\;\;\mu_{\,V}\left(\,\frac{1}{g(x_n)\,g(x_0)}\;\;,\;
\mathop {\lim}\limits_{n\;\to \;\infty }\frac{t}{g(x_n)\;-\;g(x_0)}\,\right)
 \;\;\;by (vii)\]\\
\[=\;\;\mu_{\,V}\left(\,\frac{1}{g(x_n)\,g(x_0)}\;,\;\infty\,\right)\;\;=\;\;1.
 \hspace{4.0cm}\] \\
Again ,\\
\[\nu_{\,V}\left(\,\frac{1}{g}(x_n)\,-\,\frac{1}{g}(x_0)
\,,\, t\,\right)\;=\;\nu_{\,V}\left(\,\frac{g(x_n)\,-\,g(x_0)}
{g(x_n)\,g(x_0)}\;,\;t\,\right)\;
=\;\nu_{\,V}\left(\,\frac{1}{g(x_n)\,g(x_0)}\;,\;\frac{t}{g(x_n)\,-\,g(x_0)}\,\right)\]
\\\\
Taking limit as $n\;\rightarrow\;\infty$ we have,\\\\
$\mathop {\lim}\limits_{n\;\to\;\infty }
\nu_{\,V}\left(\,\frac{1}{g}(x_n)\;-\;\frac{1}{g}(x_0)\;,\;t\,\right)$\\
\[=\;\;\nu_{\,V}\left(\,\frac{1}{g(x_n)\,g(x_0)}\;\;,\;
\mathop {\lim}\limits_{n\;\to \;\infty }\frac{t}{g(x_n)\;-\;g(x_0)}\,\right)
 \;\;\;by (vii)\]
\[=\;\;\nu_{\,V}\left(\,\frac{1}{g(x_n)\,g(x_0)}\;,\;\infty\,\right)\;\;=\;\;0.
 \hspace{3.5cm}\] \\
Hence $\frac{1}{g}$ is sequentially intuitionistic fuzzy continuous.\\
The proof is complited by considering the product of $f$ and $\frac{1}{g}$.
 \end{proof}
 \medskip
\begin{note}
Let, $(\,V=\;\mathbb{R}\;,\;\parallel\cdot\parallel\,)$ be a normed
linear space and define \,$a \;\ast\; b \;\,=\;\, \min\,\{\;a \;,\;
b\;\}$ \,and\, $a \;\diamond\; b \;\,=\;\, \max\,\{\;a \;,\; b\;\}$
\; for all \; $a \;,\; b \;\,\in\;\, [\,0 \;,\; 1\,]$ . For
all \,$t \;>\; 0$. Define , \; $\mu\,(\,x \;,\; t\,) \;\,=\;\,
\frac{t}{t \;+\; k\;\|\,x\,\|}$ \, and \, $\nu\,(\,x \;,\; t\,)
\;\,=\;\, \frac{k\;\|\,x\,\|}{t \;+\; k\;\|\,x\,\|}$ \, where \, $k
\;>\; 0$.\; It is easy to see that \, $A \;\,=\;\, \{\; (\,(\,x \;,
\; t\,) \;,\; \mu\,(\,x \;,\; t\,) \;,\; \nu\,(\,x \;,\; t\,) \;) \;\, :
\;\, (\,x \;,\; t\,) \;\,\in\;\, V \;\times\;
\mathbb{R^{\,+}} \;\}$ is an intuitionistic fuzzy norm on \,$V$. Let
$f\;:\;\mathbb{R}\;\rightarrow\;\mathbb{R}$.Then  $f$ is
continuous on $(\,V \;,\;\parallel\cdot\parallel\,)$ if and only if it is
intuitionistic fuzzy continuous on $(\,V\;,\;A\,)$.
\end{note}
\smallskip
\begin{proof}
By example (2) of \cite{Samanta}, $\{x_n\}_n$ is convergent in \,
$(\,V \;,\;\parallel\cdot\parallel\,)$ \,if and only if $\{x_n\}_n$ is
convergent in \, $(\,V\;,\;A\,)$. So, $f$ is continuous on\,
$(\,V \;,\;\parallel\cdot\parallel\,)$ \\\\
$\Leftrightarrow\;$For any sequence \,$\{x_n\}_n$ \,converging to $x$ in \,
$(\,V \;,\;\parallel\cdot\parallel\,)$, \, $\{f(x_n)\}_n$ \, converges to
$f(x)$ \,in \,$(\,V \;,\;\parallel\cdot\parallel\,)$.\\\\
$\Leftrightarrow\;$For any sequence $\{x_n\}_n$ converging to $x$ in \,
$(\,V\;,\;A\,)$, \, $\{f(x_n)\}_n$ converges to $f(x)$ in \,
$(\,V\;,\;A\,)$.\\$\Leftrightarrow\;\;f$ is continuous on \,
$(\,V\;,\;A\,)$.
\end{proof}
\smallskip
\begin{definition}
Let, $0\;<\;r\;<\;1\;,\;t\;\in\;\mathbb{R}^+$ and $\;x\;\in\;V$.
Then
the set \\
         $\;B(\,x\,,\,r\,,\,t\,)\;=\;\{\;y\;\in\;V\;:\;\mu(\,x\,-\,y \,,\, t\,)
         \;>\;1-r\;,\;\;\nu(\,x-y \,,\, t\,)\;<\;r\;\}$
is called an \textbf{open ball} in \,$(\,V\;,\;A\,)$ \,with $x$ as its
center and $r$ as its radious with respect to $t$.
\end{definition}
\smallskip
\begin{definition}
A subset \,$G$ \,of\, $V$ \,is said to be an \textbf{open set} in
$(\,V\;,\;A\,)$ \,if for each $x\;\in\;G$ there exist \,
$r_x\;\in\;(\,0\;,\;1\,)$ \,and\, $t\;\in\;\mathbb{R}^+$ \,such that
$B(\,x\,,\,r_x\,,\,t\,)\;\subseteq\;G$.
\end{definition}
\smallskip
\begin{theorem}
Every open ball \,$B(\,x\,,\,r\,,\,t\,)$ \,in\, $(\,V\,,\,A\,)$
is an open set in \,$(\,V\,,\,A\,)$
\end{theorem}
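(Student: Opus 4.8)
The plan is to verify the defining condition of an open set directly: given an arbitrary point $y \in B(x,r,t)$, I must produce a radius $r_y \in (0,1)$ and a parameter $s \in \mathbb{R}^+$ with $B(y,r_y,s) \subseteq B(x,r,t)$. Membership of $y$ means $\mu(x-y,t) > 1-r$ and $\nu(x-y,t) < r$, and the idea is to leave a little room in the $t$-variable and then absorb the perturbation coming from a small ball around $y$ by means of the triangle-type inequalities $(v)$ and $(x)$ of the intuitionistic fuzzy norm.

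First I would shave the time parameter. Since $\mu(x-y,\cdot)$ and $\nu(x-y,\cdot)$ are continuous by assumptions $(xv)$ and $(xvi)$ (the case $x=y$ being trivial, as these functions are then identically $1$ and $0$), the strict inequalities $\mu(x-y,t)>1-r$ and $\nu(x-y,t)<r$ persist on a neighbourhood of $t$; hence I can choose $t_0$ with $0<t_0<t$ for which $\mu(x-y,t_0)>1-r$ and $\nu(x-y,t_0)<r$ still hold. I then set $s = t-t_0 > 0$.

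Next I would manufacture the radius with the help of Remark $(a)$. Applying it to the pair $\bigl(\mu(x-y,t_0),\,1-r\bigr)$ produces $r_3\in(0,1)$ with $\mu(x-y,t_0)\ast r_3 > 1-r$, and applying it to the pair $\bigl(r,\,\nu(x-y,t_0)\bigr)$ produces $r_4\in(0,1)$ with $r > r_4 \diamond \nu(x-y,t_0)$. Putting $r_y = \min\{\,1-r_3,\; r_4\,\}\in(0,1)$, I would check that $B(y,r_y,s)$ works: for any $z \in B(y,r_y,s)$ one has $\mu(y-z,s) > 1-r_y \ge r_3$ and $\nu(y-z,s) < r_y \le r_4$, so by $(v)$ and the monotonicity $(iv)$ of $\ast$, $\mu(x-z,t) \ge \mu(x-y,t_0)\ast\mu(y-z,s) \ge \mu(x-y,t_0)\ast r_3 > 1-r$, and symmetrically by $(x)$ and the monotonicity of $\diamond$, $\nu(x-z,t) \le \nu(x-y,t_0)\diamond\nu(y-z,s) \le \nu(x-y,t_0)\diamond r_4 < r$. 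Thus $z \in B(x,r,t)$, giving $B(y,r_y,s)\subseteq B(x,r,t)$; as $y$ was arbitrary, $B(x,r,t)$ is open.

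The main obstacle I anticipate is precisely the first step: the splitting $x-z = (x-y)+(y-z)$ forces me to evaluate $\mu(x-y,\cdot)$ and $\nu(x-y,\cdot)$ at an argument strictly below $t$, so I genuinely need the continuity-in-$t$ hypotheses $(xv)$ and $(xvi)$, not merely the monotonicity $(vi)$ and $(xi)$, to preserve the strict inequalities. Once the cushion $s=t-t_0$ is secured, the two invocations of Remark $(a)$ and the bookkeeping that collapses the two candidate radii into the single $r_y$ are routine.
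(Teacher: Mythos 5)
Your proof is correct, and its overall architecture coincides with the paper's: fix $y\in B(x,r,t)$, retreat to a smaller time parameter $t_0<t$, use Remark 2.1(a) to manufacture a radius, and conclude with the triangle axioms $(v)$ and $(x)$ applied to the splitting $x-z=(x-y)+(y-z)$ at $t=t_0+(t-t_0)$. The one place you genuinely diverge is precisely the step you flagged as the main obstacle, and there your version is in fact a \emph{repair} of the paper's: the paper asserts that the inequalities $\mu(x-y,t)>1-r$ and $\nu(x-y,t)<r$ remain true ``for every $t_0\in(0,t)$,'' which is false in general --- by axiom $(vi)$ the map $\mu(x-y,\cdot)$ is non-decreasing, so passing to a smaller argument can only decrease $\mu(x-y,\cdot)$ (and increase $\nu(x-y,\cdot)$ by $(xi)$), and the strict inequalities may be destroyed. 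Your appeal to the continuity hypotheses $(xv)$--$(xvi)$, which are in force at this point of the paper, to preserve the strict inequalities at some $t_0<t$ near $t$ is exactly the needed correction, at the cost of making the theorem visibly depend on these extra axioms (left-continuity in $t$ would suffice). Your radius bookkeeping is also tidier: you apply Remark 2.1(a) once to the pair $\bigl(\mu(x-y,t_0),\,1-r\bigr)$ and once to $\bigl(r,\,\nu(x-y,t_0)\bigr)$ and set $r_y=\min\{1-r_3,\,r_4\}$, whereas the paper detours through an intermediate $s$ with $r_0>1-s>1-r$, takes a maximum of two auxiliary radii, and works with the complements $1-r_0$ and $1-r_3$, thereby implicitly invoking axiom $(i)$ ($\mu+\nu\le 1$) to bound $\nu(x-y,t_0)$ by $1-r_0$ --- a dependence your direct treatment of $\nu$ avoids.
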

\begin{proof}
Let, $B(\,x\,,\,r\,,\,t\,)$ \,be an open ball with center at $x$ and
radious $r$ with respect to $t$. Then,\\
\[\mu(\,x\,-\,y \,,\,t\,)\;>\;1-r\,\;\; and \;\;\,
\nu(\,x\,-\,y \,,\,t\,)\;<\;r \hspace{2.5cm}(3).\]
\\Then for every $t_0\;\in\;(\,0\,,\,t\,)$, the relation (3)
 is true. So, for $t_0\;\in\;(\,0\,,\,t\,)$, \\
  \[\mu(\,x\,-\,y \,,\,t_{0}\,)\;
 >\;1-r\,\;\; and \;\;\,
\nu(\,x\,-\,y \,,\,t_{0}\,)\;<\;r \]
\\ Let, $r_0\;=\;\mu(\,x\,-\,y \,,\,t_0\,)$. Since,
$r_0\;>\;1-r\;,\;\exists\;s\;\in\;(\,0\,,\,1\,)$ such that
$r_0\;>\;1-s\;>\;1-r$.\\ Now for given $r_0$ and $s$ such that $r_0\;>\;1-s\;,\;\exists\;r_1 \,,\, r_2\;\in\;(\,0\,,\,1\,)$
such that $r_0\;\ast\;r_1\;>\;1-s$ \,and\, $(\,1\,-\,r_0\,)
\;\diamond\;(\,1\,-\,r_2\,)\;<\;s$\\
Let, $r_3\;=\;max\;\{\,r_1 \,,\,r_2\,\}$.\\Then,
$r_0\;\ast\;r_1\;\leq\;r_0\;\ast\;r_3$\; and\; $r_2\;\leq\;r_3\;\\\Rightarrow\;1\,-\,r_3\;\leq\;1\,-\,r_2\;\\
\Rightarrow \;(\,1\,-\,r_0\,)\;\diamond\;(\,1\,-\,r_3\,)
\;\leq\;(\,1\,-\,r_0\,)\;\diamond\;(\,1\,-\,r_2\,)$.\\These implies that,\\$1\,-\,s\;<\;r_0\;\ast\;r_1\;\leq\;r_0\;\ast\;r_3\;$ and $\;(\,1\,-\,r_0\,)\;\diamond\;(\,1\,-\,r_3\,)
\;\leq\;(\,1\,-\,r_0\,)\;\diamond\;(\,1\,-\,r_2\,)
\;<\;s\;$ \\ i.e.,\;$r_0\;\ast\;r_3\;>\;1\,-\,s\;$ and $\;(\,1\,-\,r_0\,)\;\diamond\;(\,1\,-\,r_3\,)\;<\;s.$
\\Consider the open ball \,$B\,(\,y\,,\,1\,-\,r_3\,,\,t\,-\,t_0\,).$
 \\It is sufficient to show that $B\,(\,y\,,\,1\,-\,r_3\,,\,t\,-\,t_0\,)\;\subset\;B\,(\,x\,,\,r\,,\,t\,)$.
 \\ Let, $z\;\in\;B\,(\,y\,,\,1\,-\,r_3\,,\,t\,-\,t_0\,).$
 \\Then\;\;$\mu(\,y\,-\,z\;,
\;t\,-\,t_0\,)\;>\;r_3$ \;and\; $\nu(\,y-z\;,\;t\,-\,t_0\,)\;<\;1\,-\,r_3$.
\;Therefore,\[\;\mu\;(\,x\,-\,z \,,\,t\,)
\;\;=\;\;\mu\;(\,x\,-\,y\,+\,y\,-\,z \,,\,t_0\;+\;(\,t\,-\,t_0\,))\]
\[\hspace{2.5cm}\geq\;\mu\,(\,x\,-\,y \,,\,t_0\,)\;
\ast\;\mu\,(\,y\,-\,z \,,\,t\,-\,t_0\,)\]
\[\hspace{1.5cm}>\;r_0\;\ast
\;r_3\;\;>\;\;1-s\;\;>\;\;1\,-\,r.\]\\and
\[\nu\;(\,x\,-\,z \,,\,t\,)\;\;=\;\;
\nu\;(\,x\,-\,y\,+\,y\,-\,z \,,\,t_0\;+\;(\,t\,-\,t_0\,))\]
\[\hspace{2.6cm}\leq\;\nu\;(\,x\,-\,y \,,\,t_0\,)\;\diamond\;\nu\;(\,y\,-\,z \,,\,t\,-\,t_0\,)\;\]\[\hspace{2.3cm}<\;(\,1\,-\,r_0\,)\;\diamond
\;(\,1\,-\,r_3\,)\;\;<\;\;s\;<\;\;r.\]\\Thus\;
$z\;\in\;B\,(\,x\,,\,r\,,\,t\,)$ \,and hence $\;B\,(\,y\,,\,1\,-\,r_3\,,\,t\,-\,t_0\,)\;\subset\;B\,(\,x\,,\,r\,,\,t\,).$
\end{proof}
\medskip
\begin{definition}
A subset \,$N$ \,of \,$V$ \,is said to be a $\textbf{neighbourhood}$ of\,
$x\;(\,\in\;V\,)$ \,in \,$(\,V\,,\,A\,)$ \,if there exist $r\;\in\;(\,0\,,\,1\,)$
and \,$t\;\in\;\mathbb{R}^+$ \,such that \,
$B\,(\,x\,,\,r\,,\,t\,)\;\subset\;N$.
\end{definition}
\smallskip
\begin{theorem}
The following statements are equivalent:\\
$(i)\;\;\;\;f$ is intuitionistic fuzzy continuous on $U$.\\
$(ii)\;\;\;P$ is open in
$(\,V\;,\;B\,)\;\Rightarrow\;f^{-1}\;(P)$ is open in
$(\,U\;,\;A\,)$.\\$(iii)\;$ For each $x\;\in\;U\;,\;N$
is a neighbourhood of \, $f(x)$ \, in\,
$(\,V\;,\;B\,)\;\Rightarrow\;f^{-1}\;(N)$ \,is a neighbourhood of \,$x$\, in
$(\,U\;,\;A\,)$.
\end{theorem}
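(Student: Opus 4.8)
The plan is to prove the three conditions equivalent by running the cycle $(i)\Rightarrow(ii)\Rightarrow(iii)\Rightarrow(i)$, and the whole argument rests on a single translation carried out at the outset: recasting intuitionistic fuzzy continuity at a point in the language of open balls. Writing $B_U(\cdot,\cdot,\cdot)$ and $B_V(\cdot,\cdot,\cdot)$ for the open balls in $(U,A)$ and $(V,B)$, the definition of intuitionistic fuzzy continuity says that $f$ is continuous at $x_0$ precisely when, for every $\epsilon>0$ and every $\alpha\in(0,1)$, there are $\delta>0$ and $\beta\in(0,1)$ with $f\big(B_U(x_0,\beta,\delta)\big)\subseteq B_V(f(x_0),\alpha,\epsilon)$. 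First I would record this reformulation: I use $\mu(-u,t)=\mu(u,t)$ and $\nu(-u,t)=\nu(u,t)$ (axioms $(iv)$ and $(ix)$ with $c=-1$) so that $x_0-y$ and $y-x_0$ may be interchanged, and I note that if $y\in B_U(x_0,\beta,\delta)$ then both hypotheses of the two implications in the definition hold, so both conclusions hold and $f(y)\in B_V(f(x_0),\alpha,\epsilon)$. This gives the ball inclusion directly from the definition.

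For $(i)\Rightarrow(ii)$, take $P$ open in $(V,B)$ and an arbitrary $x\in f^{-1}(P)$. Since $f(x)\in P$ and $P$ is open there is a ball $B_V(f(x),\alpha,\epsilon)\subseteq P$; the ball form of continuity at $x$ then produces $\delta,\beta$ with $f\big(B_U(x,\beta,\delta)\big)\subseteq B_V(f(x),\alpha,\epsilon)\subseteq P$, so $B_U(x,\beta,\delta)\subseteq f^{-1}(P)$, and as $x$ was arbitrary $f^{-1}(P)$ is open. For $(ii)\Rightarrow(iii)$, let $N$ be a neighbourhood of $f(x)$, so $B_V(f(x),r,t)\subseteq N$ for some $r,t$. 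The essential move here is to invoke the theorem already proved that every open ball is an open set: then $P:=B_V(f(x),r,t)$ is open, so by $(ii)$ the set $f^{-1}(P)$ is open in $(U,A)$, it contains $x$ because $f(x)\in P$, and it lies inside $f^{-1}(N)$. Openness now yields a ball $B_U(x,r_x,t')\subseteq f^{-1}(P)\subseteq f^{-1}(N)$, so $f^{-1}(N)$ is a neighbourhood of $x$.

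For $(iii)\Rightarrow(i)$, fix $x\in U$, $\epsilon>0$ and $\alpha\in(0,1)$. The ball $B_V(f(x),\alpha,\epsilon)$ contains its centre $f(x)$ — since $\mu_V(\theta,\epsilon)=1>1-\alpha$ and $\nu_V(\theta,\epsilon)=0<\alpha$ by axioms $(iii)$ and $(viii)$ — and, being an open set, is a neighbourhood of $f(x)$; by $(iii)$ its preimage is a neighbourhood of $x$, giving a ball $B_U(x,\beta,\delta)\subseteq f^{-1}\big(B_V(f(x),\alpha,\epsilon)\big)$, which is the ball form of continuity at $x$. I expect the one genuinely delicate step to be translating this single ball inclusion back into the two separate implications ($\mu$ and $\nu$) demanded by the definition: the membership conclusion $f(y)\in B_V(f(x),\alpha,\epsilon)$ couples the two degrees, whereas the definition asks for each implication on its own. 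The $\mu$-implication is clean, because $\mu_U(y-x,\delta)>1-\beta$ forces $\nu_U(y-x,\delta)<\beta$ through the constraint $\mu+\nu\le 1$ (axiom $(i)$), hence $y\in B_U(x,\beta,\delta)$; keeping the $\nu$-implication aligned is exactly why the symmetric ball reformulation set up in the first paragraph must be handled carefully. Away from this matching, the reasoning is standard point-set topology transcribed to intuitionistic fuzzy balls, with the open-ball theorem carrying the weight in $(ii)\Rightarrow(iii)$ and $(iii)\Rightarrow(i)$.
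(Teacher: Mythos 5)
Your proof is correct, and it takes a genuinely different route from the paper's at the level of decomposition, while sharing the same toolkit. The paper proves four implications, organised as two biconditionals: $(i)\Rightarrow(ii)$, $(ii)\Rightarrow(i)$, $(ii)\Rightarrow(iii)$, and $(iii)\Rightarrow(ii)$. You instead run the three-step cycle $(i)\Rightarrow(ii)\Rightarrow(iii)\Rightarrow(i)$. Your $(i)\Rightarrow(ii)$ and $(ii)\Rightarrow(iii)$ match the paper's arguments almost verbatim (including the essential appeal to the earlier theorem that every open ball is open); the new content is your $(iii)\Rightarrow(i)$, which fuses the paper's $(iii)\Rightarrow(ii)$ and $(ii)\Rightarrow(i)$ into a single step by observing that $B_V(f(x),\alpha,\epsilon)$ contains its centre (axioms $(iii)$ and $(viii)$ give $\mu_V(\theta,\epsilon)=1$ and $\nu_V(\theta,\epsilon)=0$) and, being open, is a neighbourhood of $f(x)$, so $(iii)$ directly hands you a ball $B_U(x,\beta,\delta)\subseteq f^{-1}\bigl(B_V(f(x),\alpha,\epsilon)\bigr)$. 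What your route buys is economy and a cleaner logical shape; what the paper's buys is that each pairwise equivalence is established independently, so either biconditional can be quoted on its own. Your explicit up-front reformulation of continuity as the ball inclusion $f\bigl(B_U(x_0,\beta,\delta)\bigr)\subseteq B_V\bigl(f(x_0),\alpha,\epsilon\bigr)$, with the symmetry $\mu(-u,t)=\mu(u,t)$ from axiom $(iv)$ with $c=-1$, is also more careful than the paper, which uses this translation tacitly.

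On the step you flag as delicate: you are right that the ball inclusion cleanly recovers only the $\mu$-clause of the definition, since axiom $(i)$ gives $\mu_U(x-x_0,\delta)>1-\beta\Rightarrow\nu_U(x-x_0,\delta)<\beta$ and hence membership in the ball, whereas the stand-alone $\nu$-clause has a hypothesis ($\nu_U(x-x_0,\delta)<\beta$ with $\mu_U$ possibly small) that does not place $x$ in any ball, so it cannot be extracted from the inclusion alone. You should know, however, that the paper's own proof of $(ii)\Rightarrow(i)$ terminates at exactly the same inclusion $f\bigl(B(x_0,\beta,\delta)\bigr)\subseteq B\bigl(f(x_0),\alpha,\epsilon\bigr)$ and declares continuity without comment; so your proposal is no less rigorous than the paper at this point, and fully closing it requires reading the definition's two implications conjunctively, which is the reading the paper in effect uses throughout.
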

\begin{proof}
$(i)\;\Rightarrow\;(ii)\;:\;$ Suppose $f$ is intuitionistic fuzzy
continuous on \,$U$ \,and \,$P$ \,is open in $(\,V\;,\;B\,)$. If
$f^{-1}\,(P)\;=\;\phi$, then their is nothing to prove.\\
Let, $f^{-1}\,(P)\;\neq\;\phi$ \,and\, $x_0\;\in\;f^{-1}\,(P)$.
Then $f(x_0)\;\in\;P$. So, there exist \, $\epsilon\,(\;>\;0\,)$ \,
and\, $\alpha\;\in\;(\,0 \,,\, 1\,)$ \,such that\,
 $B\,(\,f(x_0) \,,\, \alpha \,,\, \epsilon\,)\;
\subset\;P$. Since \,$f$\, is intuitionistic fuzzy continuous on \,$U$,
there exist \,$\delta\,(\;>0\,)$ \,and\, $\beta\;\in\;(\,0 \,,\, 1\,)$
such that for all \,$x\;\in\;U$,\\
\[\mu_{\,U}(\,x\,-\,x_0 \,,\, \delta\,)\;>\;1\,-\,\beta\;\Rightarrow
\;\mu_{\,V}(\,f(x)\,-\,f(x_0) \,,\,\epsilon \,)\;>\;1\,-\,\alpha\;\]
\[\nu_{\,U}(\,x\,-\,x_0 \,,\, \delta\,)
\;<\;\beta\;\;\Rightarrow \;\;\nu_{\,V}(\,f(x)\,-\,f(x_0)
\,,\, \epsilon\,)\;<\;\alpha\;\;\;\]
\\i.e.,$\;x\;\in\;B\,(\,x_0 \,,\,\beta \,,\, \delta\,)
\;\Rightarrow\;f(x)\;\in\;B\,(\,f(x_0) \,,\,
\alpha \,,\, \epsilon\,)\;\subset\;P$ \\
$\Rightarrow\;\;B\,(\,x_0 \,,\, \beta \,,\, \delta\,)
\;\subset\;f^{-1}\,(P)$\\
$\Rightarrow\;\;f^{-1}\,(P)$ is open in \,$(\,U \,,\, A\,)$.
\\\\ $(ii)\;\Rightarrow\;(i)\;:\;$ Let, \,
$\epsilon\,(\;>\;0\,)$ \,and\, $\alpha\;\in\;(\,0 \,,\,1\,)$
\,and\, $x_0\;\in\;U$. \,Then \,$B\,(\,f(x_0) \,,\,
\alpha \,,\, \epsilon\,)$ \,is open in \,$(\,V \,,\, B\,)$.\\ $\Rightarrow\;\;f^{-1}\,(\,B\,(\,f(x_0)\,,\,
\alpha \,,\, \epsilon\,)\,)$ \,is open in \,$(\,U \,,\, A\,)$
containing \,$x_0$. \\$\Rightarrow\;\;\exists\;\delta\;>\;0$ \,
and\, $\beta\;\in\;(\,0 \,,\, 1\,)$ \,such that\, $B\,(\,x_0 \,,\, \beta \,,\, \delta\,)\;\subset\;f^{-1}\,(\,B\,(\,f(x_0)\,,\,\alpha\,,\,\epsilon\,)\,).$
\\$\Rightarrow\;f\;(\,B\,(\,x_0 \,,\, \beta \,,\, \delta\,)\,)\;
\subset\;B\,(\,f(x_0) \,,\, \alpha \,,\, \epsilon \,)$.\\
$\;\Rightarrow\;\;f$ \,is intuitionistic fuzzy continuous on \,$U$.\\
\\ $(ii)\;\Rightarrow\;(iii)\;:\;$ Let, $x\;\in\;U$ \,and\, $N$ \,
be a neighbourhood of \,$f(x)$\, in \,$(\,V \,,\, B\,)$.
 Therefore, there exist \,$r\;\in\;(\,0 \,,\, 1 \,)$ \,
 and \,$t\;>\;0$ such that
$B\,(\,(f(x)\,,\,r\,,\,t\,)\,)\;\subset\;N$
$\;\Rightarrow\;x\;\in\;f^{-1}\,(\,B(\,
(f(x)\,,\,r\,,\,t\,)\,)\;\subset\;f^{-1}\,(N).$
\\Again, $\,x\;\in\;f^{-1}\,(\,B\,(\,(\,f(x)\,,\,r\,,\,t\,)\,)\;$ and $\;f^{-1}\,(\,B\,(\,(\,f(x)\,,\,r\,,\,t\,)\,)\;$
is open in \,$(\,U \,,\, A\,)$. So, there exist \,
$r_1\;\in\;(\,0 \,,\, 1\,)$ \,and\, $t_1\;>\;0$ \,such that
\\$B\,(\,x \,,\, r_1 \,,\, t_1 \,)\;\subset\;f^{-1}\,
(\,B\,(\,(\,f(x)\,,\,r\,,\,t\,)\,)
\;\subset\;f^{-1}\,(N)$
\\This shows that $\;f^{-1}\,(N)$ \,is a neighbourhood of
\,$x$ \,in\, $(\,U \,,\, A\,).$\\
\\ $(iii)\;\Rightarrow\;(ii)\;:\;$ Let, $P$ be open in \,$(\,V \,,\, B\,)$
\,and \,$x\;\in\;f^{-1}\,(P)$. \;Then $\,f(x)\;\in\;P$ \,
and therefore there exist \,$\epsilon\,(\;>\;0\,)$
\,and\, $\alpha\;\in\;(\,0 \,,\, 1\,)$ \,such that\\
$B\,(\,f(x)\,,\,\alpha\,,\,\epsilon\,)\;\subset\;P$\\
$\Rightarrow\;\;P\,$ is a neighbourhood of $\,f(x)\,$ in
$\,(\,V \,,\, B\,)$\\
$\Rightarrow\;\;f^{-1}\,(P)\,$ is a neighbourhood of $\,x\,$ in $\,(\,U \,,\, A\,)$
\\$\Rightarrow\;\;\exists\;\delta\;(\;>\;0\;)$ \,and\,
$\beta\;\in\;(\,0\,,\,1\,)$ \,such that
$\,B\,(\,x\,,\,\beta\,,\,\delta\,)\;\subset\;f^{-1}\,(P).$\\
$\Rightarrow\;f^{-1}\,(P)\;$ is open in \,$(\,U\,,\,A\,)$.
\end{proof}
\smallskip
\begin{definition}
$\;f\;:\;U\;\rightarrow\;V$ is said to be \textbf{uniformly intuitionistic
fuzzy continuous} on \,$U$\, if for any given
$\epsilon\;>\;0\;,\;\alpha\;\in\;(\,0 \,,\, 1\,)
\;\exists\;\delta\;=\;\delta\,(\,\alpha \,,\,
\epsilon\,)\;>\;0\;,\;\beta\;=\;\beta\,(\,\alpha \,,\, \epsilon\,)\;>\;0$
\,such that for any two points \, $x_1\,,\,x_2\;\in\;U$,
\[ \hspace{1.0cm} \mu_{\,U}(\,x_1\,-\,x_2 \,,\, \delta \,)
\;>\;1\,-\,\beta\;\;\; and \;\;\;
\nu_{\,U}(\,x_1\,-\,x_2 \,,\, \delta\,)\;<\;\beta\;\;\] \[\Rightarrow\;\mu_{\,V}(\,f(x_1)\,-\,f(x_2),
\epsilon)\;>\;1\,-\,\alpha \;\;\;and \;\;\;
\nu_{\,V}(\,f(x_1)\,-\,f(x_2) \,,\,\epsilon\,)\;<\;\alpha\]
\end{definition}
\smallskip
\begin{theorem}
Let, $f$ be uniformly  intuitionistic fuzzy continuous
on \,$U$. If \,$\{x_n\}_n$ \,is a cauchy sequence in
\,$(\,U\;,\;A\,)$, \,then \,$\{f(x_n)\}_n$ \,is a cauchy sequence
in \,$(\,V\;,\;B\,)$.
\end{theorem}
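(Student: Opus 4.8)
The plan is to unwind both definitions and let uniform continuity convert a Cauchy estimate in $(U,A)$ into a Cauchy estimate in $(V,B)$. Recall that to prove $\{f(x_n)\}_n$ is a cauchy sequence I must show, for every fixed $p\in\mathbb{N}$ and every $t>0$, that $\lim_{n\to\infty}\mu_V(f(x_{n+p})-f(x_n),t)=1$ and $\lim_{n\to\infty}\nu_V(f(x_{n+p})-f(x_n),t)=0$. Equivalently, fixing $p$ and $t$, I would show that for each $\alpha\in(0,1)$ there is an $n_0$ so that $\mu_V(f(x_{n+p})-f(x_n),t)>1-\alpha$ and $\nu_V(f(x_{n+p})-f(x_n),t)<\alpha$ whenever $n\geq n_0$.

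First I would fix $p$, a $t>0$, and an $\alpha\in(0,1)$, and set $\epsilon=t$. Applying the definition of uniformly intuitionistic fuzzy continuity to this $\epsilon$ and $\alpha$, I obtain $\delta=\delta(\alpha,\epsilon)>0$ and $\beta=\beta(\alpha,\epsilon)>0$ (which may be taken in $(0,1)$) such that for every pair $x_1,x_2\in U$ the premise $\mu_U(x_1-x_2,\delta)>1-\beta$ and $\nu_U(x_1-x_2,\delta)<\beta$ forces $\mu_V(f(x_1)-f(x_2),\epsilon)>1-\alpha$ together with $\nu_V(f(x_1)-f(x_2),\epsilon)<\alpha$.

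Next I would feed the Cauchy hypothesis into the premise of this implication. Since $\{x_n\}_n$ is cauchy in $(U,A)$, for the fixed $p$ we have $\lim_{n\to\infty}\mu_U(x_{n+p}-x_n,\delta)=1$ and $\lim_{n\to\infty}\nu_U(x_{n+p}-x_n,\delta)=0$; hence there is an $n_0\in\mathbb{N}$ with $\mu_U(x_{n+p}-x_n,\delta)>1-\beta$ and $\nu_U(x_{n+p}-x_n,\delta)<\beta$ for all $n\geq n_0$. Choosing $x_1=x_{n+p}$ and $x_2=x_n$ in the implication above then yields $\mu_V(f(x_{n+p})-f(x_n),t)>1-\alpha$ and $\nu_V(f(x_{n+p})-f(x_n),t)<\alpha$ for all $n\geq n_0$. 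As $\alpha\in(0,1)$ was arbitrary this delivers the two required limits, and since $p$ was arbitrary $\{f(x_n)\}_n$ is cauchy in $(V,B)$.

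I do not anticipate a genuine obstacle here; the argument is a direct composition of the two $\epsilon$–$\delta$ style statements. The only point needing care is the order of the quantifiers: one must extract $\delta$ and $\beta$ from the uniform-continuity statement \emph{before} invoking the Cauchy limit, so that this same $\delta$ serves simultaneously as the parameter in the $\mu_U,\nu_U$ estimates and as the $\delta$ of the implication. It is essential that the continuity is uniform rather than merely pointwise, for a single pair $(\delta,\beta)$ must work for all the pairs $(x_{n+p},x_n)$ at once; this is precisely what permits absorbing the Cauchy estimate into the hypothesis without reference to any limit point of the sequence.
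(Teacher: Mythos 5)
Your proposal is correct and follows essentially the same route as the paper: extract $\delta$ and $\beta$ from uniform intuitionistic fuzzy continuity first, then feed the Cauchy estimate for $\{x_n\}_n$ at parameter $\delta$ into that implication. The only cosmetic difference is that you work with the fixed-$p$ formulation of Cauchyness exactly as in the paper's Definition, while the paper's proof uses the equivalent two-index condition for all $m,n\geq k$; the substance is identical.
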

\begin{proof}
$\;f\;$ is  uniformly intuitionistic fuzzy continuous on \,$U$.
$\\\;\Rightarrow\;$ \,For any given
$\epsilon\;>\;0\;,\;\alpha\;\in\;(\,0\,,\,1\,)\;\exists\;\delta\;=\;
\delta\,(\,\alpha\,,\,\epsilon\,)\;>\;0\;,\;\beta
\,=\,\beta\,(\,\alpha\,,\,\epsilon\,)\;>\;0\;$
such that for any two points $x^\prime\;,\;x^{\prime\prime}\;\in\;U$,
\[\mu_{\,U}(\,x^\prime\,-\,x^{\prime\prime} \,,\,
\delta\,)\;>\;1\,-\,\beta\;\; and \;\;\nu_{\,U}
(\,x^\prime\,-\,x^{\prime\prime} \,,\,
\delta\,)\;<\;\beta\;\;\]
\[\Rightarrow\;\mu_{\,V}(\,f(x^\prime)\,-\,f(x^{\prime\prime}\,) \,,\,
\epsilon\,)\;>\;1\,-\,\alpha \;\;and \;\; \nu_{\,V}(\,f(x^\prime)\,
-\,f(x^{\prime\prime} \,,\,\epsilon\,))\;<\;
\alpha\;\;\;\;\cdots\;\;\;\;(4)\] \\
Since \,$\{x_n\}_n$ \,is a cauchy sequence, for \,
$\delta\;>\;0$ \,and\, $\beta\;\in\;(\,0 \,,\, 1)$
there exist a natural number \,$k$\, such that\\
\[\hspace{2.0cm}\mu_{\,U}(\,x_n\,-\,x_m \,,\, \delta\,)\;>\;1\,-\,\beta
\;\;\;and\;\;\;                                                                                                                             \nu_{\,U}(\,x_n\,-\,x_m \,,\,\delta\,)\;<\;\beta\;\;\;\;\forall\;m,n\;\geq\;k\]
\[\Rightarrow\;\;\mu_{\,U}(\,f(x_n)\,-\,f(x_m) \,,\,\epsilon\,)
\;>\;1\,-\,\alpha \;\;and\;\;                                                                                                                             \nu_{\,U}(\,f(x_n)\,-\,f(x_m) \,,\,\epsilon\,)
\;<\;\alpha\;\;\;\forall\;m,n\;\geq
\;k\;\;\;\;\;(by\;(4))\]\\$\Rightarrow\;\{f(x_n)\}_n$ \,
is a cauchy sequence in\, $(\,V\;,\;B\,)$
\end{proof}
\smallskip
\begin{theorem}
If \,$f\;:\;U\;\rightarrow\;V\;$ is uniformly intuitionistic
fuzzy continuous on $U$ then \,$f$\, is intuitionistic fuzzy
continuous on \,$U$\, but not the converse.
\end{theorem}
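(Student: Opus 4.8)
The plan is to separate the two assertions: the implication (uniform $\Rightarrow$ pointwise) is a one-line specialization, while the failure of the converse requires building an explicit counterexample, which is where the real work lies. For the forward direction I would start from the definition of uniform intuitionistic fuzzy continuity and merely freeze one of the two free points. Fix an arbitrary $x_0 \in U$ and let $\epsilon > 0$, $\alpha \in (0,1)$ be given. Uniform continuity hands us $\delta = \delta(\alpha,\epsilon) > 0$ and $\beta = \beta(\alpha,\epsilon) > 0$ that work for \emph{every} pair $x_1, x_2 \in U$. Taking $x_1 = x$ (an arbitrary point of $U$) and $x_2 = x_0$, the hypotheses $\mu_U(x - x_0, \delta) > 1-\beta$ and $\nu_U(x - x_0, \delta) < \beta$ immediately yield $\mu_V(f(x) - f(x_0), \epsilon) > 1-\alpha$ and $\nu_V(f(x) - f(x_0), \epsilon) < \alpha$, which is exactly the condition for intuitionistic fuzzy continuity at $x_0$ in Definition 2.8. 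Since $x_0$ was arbitrary, $f$ is intuitionistic fuzzy continuous on $U$. The only thing worth flagging is that the $\delta, \beta$ furnished by uniform continuity depend on $\epsilon, \alpha$ but not on $x_0$, which is precisely what legitimizes holding $x_0$ fixed while $x$ ranges over $U$.

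For the converse I would exhibit a map that is intuitionistic fuzzy continuous but not uniformly so, leaning on Note 3.1. Take $U = V = \mathbb{R}$ with the usual norm and the induced intuitionistic fuzzy norm $\mu(x,t) = \frac{t}{t + k\|x\|}$, $\nu(x,t) = \frac{k\|x\|}{t + k\|x\|}$ with $k > 0$. By Note 3.1 a map $f : \mathbb{R} \to \mathbb{R}$ is intuitionistic fuzzy continuous on $(\mathbb{R}, A)$ if and only if it is continuous in the ordinary sense. I would then pick $f(x) = x^2$, which is everywhere continuous, hence intuitionistic fuzzy continuous, yet classically fails to be uniformly continuous on $\mathbb{R}$. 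The task is to convert that classical failure into the failure of uniform intuitionistic fuzzy continuity.

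The main obstacle, and the only genuinely computational step, is the dictionary between the fuzzy inequalities and ordinary norm inequalities. A short manipulation shows that for this norm the conjunction $\mu_U(x_1 - x_2, \delta) > 1-\beta$ and $\nu_U(x_1-x_2, \delta) < \beta$ is equivalent to the single inequality $\|x_1 - x_2\| < \frac{\beta \delta}{k(1-\beta)}$, and likewise the conclusion reduces to $\|f(x_1) - f(x_2)\| < \frac{\alpha \epsilon}{k(1-\alpha)}$; thus uniform intuitionistic fuzzy continuity here is just ordinary uniform continuity. To keep the argument self-contained I would negate the definition directly: fix $\alpha, \epsilon$ so that the target tolerance $\eta = \frac{\alpha\epsilon}{k(1-\alpha)}$ is some prescribed positive number, and for an arbitrary candidate $\delta, \beta$ (giving an admissible gap $\delta' = \frac{\beta\delta}{k(1-\beta)} > 0$) choose the pair $x_1 = n + \frac{c}{n}$, $x_2 = n$ with $c > \eta/2$ fixed and $n$ large. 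Then $\|x_1 - x_2\| = \frac{c}{n} \to 0$, so the hypothesis holds once $n$ is large, while $|f(x_1) - f(x_2)| = 2c + \frac{c^2}{n^2} > 2c > \eta$, so the conclusion fails. Hence no single $\delta, \beta$ serves all pairs, $f$ is not uniformly intuitionistic fuzzy continuous, and the separation is established. Conceptually this is nothing more than the textbook $x^2$ example carried across the equivalence of Note 3.1; the rest is routine algebra.
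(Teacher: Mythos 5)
Your proposal is correct, and for the ``not the converse'' half it takes a genuinely different route from the paper's. On the implication itself you and the paper agree in substance: the paper's proof is literally the word ``Obvious,'' and your write-up (freeze $x_2 = x_0$ and note that $\delta,\beta$ from uniform continuity are independent of the point) is exactly the intended specialization, just made explicit. For the counterexample, the paper (Example 3.1) takes $f(x)=\frac{1}{x}$ on $(0,1)$, verifies sequential intuitionistic fuzzy continuity by direct computation of $\mu_2,\nu_2$, and then refutes uniform continuity \emph{indirectly} via Theorem 3.4: a uniformly intuitionistic fuzzy continuous map carries Cauchy sequences to Cauchy sequences, yet $x_n=\frac{1}{n+1}$ is Cauchy in $(X,A)$ while $f(x_n)=n+1$ is not Cauchy in $(X,B)$ (using the equivalence of Cauchyness with the classical norm quoted from example 2 of Samanta and Jebril). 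You instead take $f(x)=x^2$ on all of $\mathbb{R}$, establish the dictionary $\mu(x,t)>1-\beta \;\Leftrightarrow\; \nu(x,t)<\beta \;\Leftrightarrow\; \|x\|<\frac{\beta t}{k(1-\beta)}$ (valid here since $\mu+\nu=1$), conclude that uniform intuitionistic fuzzy continuity for these standard norms is precisely classical uniform continuity, and then negate the definition directly with the pair $x_1=n+\frac{c}{n}$, $x_2=n$; the computation $|f(x_1)-f(x_2)|=2c+\frac{c^2}{n^2}>\eta$ is right. Both arguments are sound. Yours buys self-containedness and transparency: no reliance on Theorem 3.4 or on the external Cauchy-equivalence, an explicit quantifier negation, and a reusable reduction lemma. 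The paper's buys economy --- the Cauchy-sequence criterion dismisses the example in two lines --- and doubles as an illustration of Theorem 3.4 as a practical test for non-uniformity. One small point of hygiene: the paper's Definition 3.3 writes $\beta>0$, but your equivalence requires $\beta\in(0,1)$ (clearly the intended reading, and the one you use silently); it would be worth stating.
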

\begin{proof}
Obvious.
\end{proof}
\smallskip
To show the converse result does not hold, consider the following example.
\smallskip
\begin{example}
Let, $(\,X\,=\,\mathbb{R}\,,\,\parallel \cdot \parallel\,)$
be a normed linear space, where $\parallel\,x\,\parallel\;=\;\mid\,x\,\mid\;,\;
\forall\;x\;\in\;\mathbb{R}$. Define \,$a\;\ast\;b\;=\;min\;\{\,a\,,\,b\,\}$
\,and\, $a\;\diamond\;b\;=\;max\,\{\,a\,,\,b\,\}\;\;\forall\;a,b\;\in\;
[\,0 \,,\, 1\,]$. Also, define \[\mu_1\;,\;\nu_1\;,\;
\mu_2\;,\;\nu_2\;:\;X\;\times\;
\mathbb{R}\;\rightarrow\;[\,0 \,,\, 1\,] \;\;\;\;by\]
 \[\mu_1\;=\;\frac{t}{t\;+\;|\,x\,|}\;, \;\nu_1\;=\;
 \frac{|\,x\,|}{t\;+\;|\,x\,|}\;,\;
\mu_2\;=\;\frac{t}{t\;+\;k|\,x\,|}\;,
\;\nu_2\;=\;\frac{k|\,x\,|}{t\;+\;k|\,x\,|}\;,\]\\
Let, \,$A\;=\;\{\,(\,(\,x\,,\,t\,)\;,\;\mu_1\;,\;
\nu_1\,)\;:\;(\,x\,,\,t\,)\;\in\;X\;\times\;\mathbb{R}\;\}$ \,and\,
$B\;=\;\{\,(\,(\,x\,,\,t\,)\;,\;\mu_2\;,\;\nu_2\,)
\;:\;(\,x\,,\,t\,)\;\in\;X\;\times\;\mathbb{R}\;\}$
\,be two intuitionistic fuzzy norm on \,$X$.\\
Let us define \,$f(x)\;=\;\frac{1}{x}\;\,\forall\;x\;\in\;
(\,0 \,,\, 1\,)$. \,First we show that \,$f$\,
 is intuitionistic fuzzy continuous on \,$(\,0 \,,\, 1\,)$.
 Let, \,$x_0\;\in\;(\,0\,,\,1\,)$ \,and\, $\{x_n\}_n$ \,
 be a sequence in \,$(\,0\,,\,1\,)$ \,such that\,
 $x_n\;\rightarrow\;x_0$ \,in \,$(\,X\,,\,A\,)$.
i.e.,\;for all\, $t>0$, \[\mathop {\lim }\limits_{n\;
\to \;\infty } \;\mu_1 \,(\,x_n\,-\,x_0 \,,\, t\,)\;\;
=\;\;1\;\;\; and \;\;\;\mathop {\lim }\limits_{n\;
\to \;\infty } \;\nu_1 \,(\,x_n\,-\,x_0 \,,\, t\,)\;\;=\;\;0\;\]
\[\Rightarrow\;\mathop {\lim }\limits_{n\;
\to \;\infty } \;\frac{t}{t\;+\;|\,x_n\,-\,x_0|}\;\;=\;\;
1\;\;\; and \;\;\;\mathop {\lim }\limits_{n\;
\to \;\infty } \;\frac{|\,x_n\,-\,x_0\,|}{t\;+\;|\,x_n\,
-\,x_0|\,}\;\;=\;\;0\;\]\\
\[\Rightarrow\;\mathop {\lim }\limits_{n\;
\to \;\infty } \;|\,x_n\,-\,x_0\,|\;\;=\;\;0\hspace{7.5cm}\]\\
Again, for all \,$t>0$,\,
\[\mu_2\,(\,f(x_n)\,-\,f(x_0)\,,\,t\,)\;\;=\;\;
\frac{t}{t\;+\;k\,|\,f(x_n)\,-\,f(x_0)\,|}\;\,,
\;\;=\;\;\frac{t \;x_n\; x_0}{t\; x_n\; x_0\;+\;k\,|\,x_n\,-\,x_0\,|}\]
\[\Rightarrow\;\mathop {\lim }\limits_{n\;
\to\;\infty } \;\mu_2(\,f(x_n)\,-\,f(x_0)\,,\,t\,)
\;\,=\,\;1 \hspace{1.5cm}\] and
\[ \;\;\nu_{\,2}(\,f(x_n)\,-\,
f(x_0) \,,\, t\,)\;=\;\frac{k\,|\,f(x_n)\,-\,f(x_0)\,|}{t\;+
\;k\,|\,f(x_n)\,-\,f(x_0)\,|}\;\;=\;\;
\frac{k\,|\,x_n\,-\,x_0\,|}{t\, x_n\, x_0\;+\;k\,|\,x_n\,-\,x_0|\,}\]
\[\Rightarrow\;\mathop {\lim }\limits_{n\;
\to \;\infty } \;\nu_{\,2}(\,f(x_n)\,-\,f(x_0) \,,\,t\,)
\;\;=\;\;0 \hspace{1.5cm}\] \\
Thus \,$f$\, is sequentially intuitionistic fuzzy continuous
on \,$(\,0\,,\,1\,)$\, and hence intuitionistic fuzzy continuous
on \,$(\,0\,,\,1\,)$. We now show that \,$f$\, is not
uniformly intuitionistic fuzzy continuous on \,$(\,0\,,\,1\,)$.
By example $2$ of \cite{Samanta}, we see that \,$\{x_n\}_n$\,
is a cauchy sequence in \,$(\,X\;,\;\|\,\cdot\,\|\,)$\, if and only if
\,$\{x_n\}_n$\, is a cauchy sequence in \,$(\,X\;,\;A\,)$
\;or\; $(\;X\;,\;B\;)$. \\ Let, \,$x_n\;=\;\frac{1}{n\,+\,1}\;\;\forall\;n\;\in\;\mathbb{N}$.
So, \,$\{f(x_n)\}_n$\, is a not a cauchy sequence in \,$(\,X\;,\;\|\,\cdot\,\|\,)$
and hence not a cauchy sequence \,$(\,X\,,\,B\,)$.\\
Consequently, \,$f$\, is not uniformly intuitionistic fuzzy
continuous on \,$(\,0\,,\,1\,)$.
\end{example}
\bigskip
\section{\textbf{Uniformly Intuitionistic Fuzzy Convergence}}
 In this section we assume that \,$(\,U\;,\;A\,)$ \,and\,
(\,V\;,\;B\,) \,are two intuitionistic fuzzy normed linear space over
the same field \,$F$.
\begin{definition}
Let, \,$f_n\;:\;(\,U\;,\;A\,)\;\rightarrow\;(\,V\;,\;B\,)\;$\, be a
sequence of functions. The sequence $\;\{f_n\}_n\;$ is said to be
\textbf{pointwise intuitionistic fuzzy convergent} on \,$U$\,
with respect to \,$A$\, if for each $\;x\;\in\;U\;$
, the sequence \,$\;\{\,f_n\,(\,x\,)\,\}_n\;$\,
is convergent with respect to \,$B$.
\end{definition}
Let, the sequence \,$\{f_n\}_n$\, be pointwise
intuitionistic fuzzy convergent on \,$U$\, and let, \,$c\;\in \;U$.\,
Then the sequence \,$\{\,f_n\,(\,c\,)\,\}_n$\, is intuitionistic fuzzy
convergent on \,$(\,V\;,\;B\,)$. Let, \,$f_n\,(\,c\,)
\;\rightarrow\;y_c$\, in \,$(\,V\;,\;B\,)$. \,Then \,$y_c$\,
is unique. Let us now define \,$f\;:\;(\,U\,,\,A\,)\;\rightarrow\;
(\,V\;,\;B\;\,)$\, by \,$f\,(x)\;=\;y_{\,x}\;\;\forall\;x\;\in\;U$, where
$f_n\,(\,x\,)\;\rightarrow\;y_{\,x}$\, in \,$(\,V\;,\;B\,)$.
\,Then \,$f$\, is said to be the intuitionistic fuzzy limit
function of the sequence \,$\{f_n\}_n$\, on \,$U$\, and it is written as \,$f_n\;\rightarrow\;f$\, on \,$(\;U\;,\;A\;)$.
\smallskip
\begin{example}
Let, \,$a\;\ast\;b\;=\;min\;\{\,a\;,\;b\,\}\;,\;a\;\diamond\;b\;
=\;max\;\{\,a\;,\;b\,\}$\, for all \,$a\,,\,b\;\in\;[\,0\,,\,1\,].$\, Define \,$\mu\,(\,x\,,\,t\,)\;=\;\frac{t}{t\;+\;|\,x\,|}$\,
\,and\, \,$\nu\,(\,x\,,\,t\,)\;=\;\frac{|\,x\,|}{t\;+\;|\,x\,|}.$
\\Let, \,$U\;=\;(\,-1 \,,\, 1\,)$\, , \,
$V\;=\;\mathbb{R}$\,,\,$\mu \;=\; \mu_{\,U} \;=\; \mu_{\,V}$ \,,\,
\,$\nu \;=\; \nu_{\,U} \;=\; \nu_{\,V}$\, and \,$\;f
_n\;:\;(\,U \,,\, A\,)\;\rightarrow\;(\,V \,,\, B\,)$\, be defined by \,$f_n\,(x)\;=\;x^{\,n}\;\;\forall\;x\;\in\;U$.\, Also, let \,$O\,(x)\;=\;0\;\;\forall\;x\;\in\;U$. \,Therefore,
\[\mu\,(\,f_n\,(x)
\,-\, 0 \;,\; t\,)\;\;=\;\;\frac{t}
{t\;+\;|\,x\,|^{\,n}}\;\longrightarrow\;1\;\;\;
\;as \;\;\;n\;\rightarrow\;\infty\;\]
and \[\nu\,(\,f_n\,(x)
\,-\,0\,,\,t\,)\;=\;\frac{|\,x\,|^{\,n}}{t\;+\;|\,x\,|^{\,n}}
\;=\;1\,-\,\frac{t}{t\;
+\;|\,x\,|^{\,n}}\;\rightarrow\;0\;\;\; as\;\; \;n\;\rightarrow\;\infty\]\\
$\Rightarrow\;\;\{f_n\}_n$\, is pointwise intuitionistic fuzzy
convergent to \,$O$\, on \,$(\,U \,,\, A\,).$
\end{example}
\smallskip
\begin{example}
Let, \,$a\;\ast\;b\;=\;min\;\{\,a\;,\;b\,\}\;,\;a\;\diamond\;b\;
=\;max\;\{\,a\;,\;b\,\}$\, for all \,$a\,,\,b\;\in\;[\,0\,,\,1\,].$\,
Let, \,$U\;=\;\{\,x\;\in\;\mathbb{R}\;:\;x\;\geq\;0 \,\}$ \,,\,
$V=\mathbb{R}$\,,\,\,$\mu \;=\; \mu_{\,U} \;=\; \mu_{\,V}$ \,,\,
\,$\nu \;=\; \nu_{\,U} \;=\; \nu_{\,V}$\, \;where \[\mu\,(\,x\,,\,t\,)\;=
\;\frac{t}{t\;+\;|\,x\,|}\,
\;\;\;and\;\;\; \,\nu\,(\,x\,,\,t\,)\;=\;\frac{|\,x\,|}{t\;+\;|\,x\,|}.\]
Consider, \[g_n\,(x)\;=\;\frac{n}{x\;+\;n}\;\;\forall\;x\;\in\;U
\;\;\;and\;\;\; g\,(x)\;\;=\;\;1\;\;\forall\;x\;\in\;U.\]
\[Therefore, \;\;\;g_n\,(x)\,-\,g\,(x)\;\;=\;\;
\frac{n}{x\;+\;n}\;-\;1\;\;=\;\;-\;\frac{x}{x\;+\;n}\]
\[\mu\,(\,g_n\,(x)\,-\,g\,(x)\;,\;t\,)\;\,=\;\,\mu\,(\,-\;\frac{x}{x
\;+\;n}\;,\;t\,) \hspace{4.5cm}\] \[ \hspace{3.5cm}=\;\;\frac{t}{t\;
+\;|\;-\;\frac{x}{x\;+\;n}\;|}\;\;
=\;\;\frac{t}{t\;+\;\frac{x}{x\;+\;n}\;}\;\rightarrow
\;1\;\; as\; \;n\;\rightarrow\;\infty\]
and \[\nu\,(\,g_n\,(x)\,-\,g\,(x)\,,\,t\,)\;=\;\frac{\frac{x}{x\;
+\;n}}{t\;+\;\frac{x}{x\;+\;n}}\;=\;\frac{x}{x\;+
\;t\,(\,x\,+\,n\,)}\;\rightarrow\;0\;\, as \;\;n\;
\rightarrow\;\infty \]\\Thus, we see that \,$g_n(x)\;\rightarrow \;g(x)\;\;\;\forall\;x\;\in\;U$\, with respect to \,$B.$
\end{example}
\smallskip
\begin{definition}
Let, \,$f_n\;\:\;(\,U\;,\;A\,)\rightarrow\;(\,V\;,\;B\,)$ \,be a
sequence of functions.The sequence \,$\{f_n\}_n$\, is said to be
\textbf{uniformly intuitionistic fuzzy convergent} on \,$U$\, to a
function $f$ with respect to \,$A$, \,if given
\,$0\;<\;r\;<\;1\;,\;t\;>\;0$\, there exist a positive integer\,
$n_0\;=\;n_0\;(\,r\,,\,t\,)$\, such that \,$\forall\;x\;\in\;U$\, and
\,$\forall\;n\geq n_0\;,$
\[\mu\,(\,f_n(x)\;-\;f(x) \,,\, t\,)\;>\;1\,-\,r
\;\;,\; \;\nu\,(\,f_n(x)\;-\;f(x) \,,\, t\,)\;<\;r\]
\end{definition}
\smallskip
\begin{example}
\;In the example\,$(4.1)$, we have seen that \,$\;f_n\;\rightarrow\;O\;$\,
with respect to \,$A$. Let us show that this convergence is not
uniform on \,$(\,0\;,\;1\,)$\, but converges uniformly on
\,$[\,0 \,,\, a\,]$\, \,\, where \,$0\;<\;a\;<\;1$\,,\,
with respect to \,$A$. \\Let, \,$c\;\in\;(\,0\,,\,1\,)\;,
\;r\;\in\;(\,0\,,\,1\,)$\,
and \,$\;t\;>\;0\;.$ \,Then,
\[\mu\,(\,f_n(c)\,-\,O\,(c)\;,\;t\,)\;>\;1\,-\,r\;\;\;and \;\;\;\nu\,(\,f_n(c)\,-\,O\,(c)\;,\;t\,)\;<\;r\]
\[\Rightarrow\;\;\frac{t}{t\;+\;c^{\,n}}\;>\;1\,-\,r \;\;\;and\;\;\; \;\frac{c^{\,n}}{t\;+\;c^{\,n}}\;<\;r\]
\[\Rightarrow\;\;c^{\,n}\;<\;\frac{r\,t}{(\,1\,-\,r\,)} \;\;\;\Rightarrow\;\;\frac{1}{c^{\,n}}\;>\;\frac{(\,1\,-\,r\,)}{r\,t}\;\]
\[\;\;\;\;\Rightarrow\;\;n\;>\;\frac{\log\;\left(\,\frac{(\,1\,-\,r\,)}{r\,t}\right)}
{\log\,\left(\,\frac{1}{c}\,\right)} \hspace{4.5cm}\]
\\Let, \,$k\;\; = \;\;\left[ {\;\frac{{\log \;\left(
{\,\frac{{1\; - \;r}}{{r\;t}}\,} \right)}}{{\log \,
\left( {\,\frac{1}{c}\,} \right)}}\;} \right]\;\; + \;\;1$\\\\
Then, for each \,$x\;\in(\,0\,,\,1\,)$\, and given \,
$r\;\in(\,0\,,\,1\,)$\, and \,$t\;>\;0\,,$
\[\mu\,(\,f_n(x)\,-\,O(x)\;,\;t\,)\;>\;1\,-\,r\;\;and \;\;\nu\,(\,f_n(x)\,-\,O(x)\;,\;t\,)\;<\;r\;\;\;\forall\;n\;\geq\;k\]
where, \,$k\;\; = \;\;\left[ {\;\frac{{\log \;\left(
{\,\frac{{1\; - \;r}}{{r\;t}}\,} \right)}}{{\log \,
\left( {\,\frac{1}{x}\,} \right)}}\;} \right]\; + \;1$\,,\, which shows
that \,$k$\, depends on \,$r\,,\,t$\, as well as on \,$x$.\,
 Also, we see that as \,$x\;\rightarrow\;1\;\Rightarrow
 \;k\;\rightarrow\;\infty.$\\
$\Rightarrow\;\;\{\,f_n\,\}_n$\, is not uniformly
 intuitionistic fuzzy convergent on \,$(\,0\,,\,1\,)$\,
 with respect to \,$A.$\\
Let, \,$a\;\in\;(\,0\,,\,1\,)$.\, In \,$[\,0\,,\,a\,]$, \,the greatest value of \,
$\left[ {\;\frac{{\log \;\left(
{\,\frac{{1\; - \;r}}{{r\;t}}\,} \right)}}{{\log \,
\left( {\,\frac{1}{x}\,} \right)}}\;} \right]$\; is\;
 $\left[ {\;\frac{{\log \;\left(
{\,\frac{{1\; - \;r}}{{r\;t}}\,} \right)}}{{\log \,
\left( {\,\frac{1}{a}\,} \right)}}\;} \right]$. \,So,
let $\;n_0\;=\;\left[ {\;\frac{{\log \;\left(
{\,\frac{{1\; - \;r}}{{r\;t}}\,} \right)}}{{\log \,
\left( {\,\frac{1}{a}\,} \right)}}\;} \right]\;+\;1.$
\\\\Therefore, for all \,$x\;\in\;[\,0\,,\,a\,]$\,,\,
given \,$r\;\in\;(\,0\,,\,1\,)$\,
and \,$t\;>\;0$\,,\, there exist a natural number
\,$n_0\;=\;n_0(\,r\,,\,t\,)$\, such that
\[\mu\,(\,f_n(x)\,-\,O(x)\;,\;t\,)\;>\;1\,-\,r\;\; and \;\;\nu\,(\,f_n(x)\,-\,O(x)\;,\;t\,)\;<\;r\;\;\;\forall\;n\;\geq\;n_0\]
$\Rightarrow\;\;\;\{\,f_n\,\}_n$\, is uniformly
intuitionistic fuzzy  convergent on \,$[\,0\,,\,a\,]$\, with respect to
\,$A$\, , \,where \,$a\;\in\;(\,0\,,\,1\,)$.
\end{example}
\smallskip
\begin{result}
Let \,$ \left( {\,U\;,\;\left\| {\, \cdot \,} \right\|_{\,1} \,} \right) $
\,and\, $ \left( {\,V\;,\;\left\| {\, \cdot \,} \right\|_{\,2} \,} \right) $\,
be two normed linear space over the field \,$ K \;=\; \mathbb{R}
\;or\; \mathbb{C}$\, , \,$ f_{\,n} \;:\; U \;\rightarrow\; V \;\,
\forall \;\,n \;\in\; \mathbb{N}$\, , \,$ a \;\ast\; b \;=\; \min\,\{\,a
\,,\, b\,\} $, \,$ a \;\diamond\; b \;=\; \max\,\{\,a
\,,\, b\,\} \;\; \forall \;\, a \,,\, b \;\in\; [\,0 \,,\, 1\,]$. For all
\,$t \;>\; 0$\, , \,define \[\mu_{\,U}\,(\,x \,,\, t\,) \;=\; \frac{t}
{t \,+\, k\,\left\| {\, x \,} \right\|_{\,1}} \;\;\;,\;\;\;
 \nu_{\,U}\,(\,x \,,\, t\,) \;=\; \frac{k\,\left\| {\, x \,} \right\|_{\,1}}
{t \,+\, k\,\left\| {\, x \,} \right\|_{\,1}} \;\,,\]
\[\mu_{\,V}\,(\,x \,,\, t\,) \;=\; \frac{t}
{t \,+\, k\,\left\| {\, x \,} \right\|_{\,2}} \;\;\;,\;\;\;
 \nu_{\,V}\,(\,x \,,\, t\,) \;=\; \frac{k\,\left\| {\, x \,} \right\|_{\,2}}
{t \,+\, k\,\left\| {\, x \,} \right\|_{\,2}} \;\,,\] where \,$k \;>\; 0$ .
Let \[A \;\,=\;\, \left\{\,\left(\,(\,x \,,\, t\,) \,,\, \mu_{\,U}\,(\,x
\,,\, t\,) \,,\, \nu_{\,U}\,(\,x \,,\, t\,)\,\right) \;\,:\;\, (\,x \,,\, t\,) \;
\in\; U\;\times\;\mathbb{R^{\,+}}\,\right\} \;,\]
\[B \;\,=\;\, \left\{\,\left(\,(\,x \,,\, t\,) \,,\, \mu_{\,V}\,(\,x
\,,\, t\,) \,,\, \nu_{\,V}\,(\,x \,,\, t\,)\,\right) \;\,:\;\, (\,x \,,\, t\,) \;
\in\; U\;\times\;\mathbb{R^{\,+}}\,\right\} \;\,\,\,\]
Then \,$(\,U \,,\, A\,)$\, and \,$(\,V \,,\, B\,)$\, are intuitionistic fuzzy
normed linear space. Following the example $(2)$ of \cite{Samanta} , it can
shown that \,$\{\,f_{\,n}\,\}$\, is uniformly intuitionistic fuzzy convergent on
\,$U$\, with respect to \,$A$\, if and only if \,$\{\,f_{\,n}\,\}$\, is uniformly
convergent with respect to \,$\left\| {\, \cdot \,} \right\|_{\,1}$ .
\end{result}
\begin{theorem}
Let, \,$f_n\;:\;(\,U\,,\,A\,)\;\rightarrow\;(\,V\,,\,B\,)
\;,\;\forall\;n\;\in\;\mathbb{N}$ \,be a sequence of functions.
Then the sequence \,$\{\,f_n\,\}_n$\, is uniformly intuitionistic
fuzzy  convergent on \,$(\,U\,,\,A\,)$\, if and only if for any
given \,$r\;\in\;(\,0\,,\,1\,)$\, and \,$t\;>\;0$\, there exist
a natural number \,$k\;=\;k\,(\,r\;,\;t\,)$\, such that
\,$\forall\;x\;\in \;U$\,,\[\mu\,(\,f_{n + p}(x)\,-\,f_n(x)
\;,\;t\,)\;>\;1\,-\,r \;
\;,\;\;\nu\,(\,f_{n + p}(x)\,-\,f_n(x)\;,\;t\,)\;<\;r\;\;,\]
\[\hspace{5.5cm}\forall\;n\;
\geq\;k\;and\;p\;=\;1\,,\,2\,,\,3\,,\,\cdots\]
\end{theorem}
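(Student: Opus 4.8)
This is the Cauchy criterion for uniform intuitionistic fuzzy convergence, so my plan is to prove the two implications separately, using the triangle inequalities $(v)$ and $(x)$ of the norm $B$ as the engine in both directions.

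For the forward implication, suppose $f_n \to f$ uniformly on $(U,A)$ and fix $r \in (0,1)$, $t > 0$. I would first choose a single $\rho \in (0,1)$ small enough that $(1-\rho)\ast(1-\rho) > 1-r$ and $\rho \diamond \rho < r$; such a $\rho$ exists by the continuity of $\ast$ and $\diamond$ together with $1 \ast 1 = 1$ and $0 \diamond 0 = 0$ (equivalently, by part $(b)$ of the Remark in Section $2$). Uniform convergence then furnishes a single $k$ with $\mu(f_n(x)-f(x),\frac{t}{2}) > 1-\rho$ and $\nu(f_n(x)-f(x),\frac{t}{2}) < \rho$ for every $x \in U$ and every $n \geq k$. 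Splitting $t = \frac{t}{2}+\frac{t}{2}$ and using $(v)$ together with the symmetry $\mu(f(x)-f_n(x),\cdot)=\mu(f_n(x)-f(x),\cdot)$ from $(iv)$, I obtain
\[
\mu\left(f_{n+p}(x)-f_n(x),\,t\right)\;\geq\;\mu\left(f_{n+p}(x)-f(x),\,\frac{t}{2}\right)\ast\mu\left(f_n(x)-f(x),\,\frac{t}{2}\right),
\]
and the monotonicity of $\ast$ pushes the right side above $(1-\rho)\ast(1-\rho) > 1-r$; the analogous computation with $(x)$ and $(ix)$ gives $\nu(f_{n+p}(x)-f_n(x),t) < \rho\diamond\rho < r$. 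Since $k$ depended on neither $x$ nor $p$, this is exactly the asserted condition.

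For the converse the first task is to manufacture the limit function. Fixing $x \in U$, the hypothesis says precisely that $\{f_n(x)\}_n$ is a Cauchy sequence in $(V,B)$; assuming $(V,B)$ is complete, I define $f(x) := \lim_n f_n(x)$, which then exists and is unique, and this is the candidate uniform limit. The appeal to completeness of $(V,B)$ is the one ingredient the statement leaves implicit, and it is where I expect the genuine content to sit.

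To promote pointwise to uniform convergence, I fix $r \in (0,1)$ and $t>0$, pick any $\rho \in (0,r)$, and take $k=k(\rho,\frac{t}{2})$ from the hypothesis, so that $\mu(f_{n+p}(x)-f_n(x),\frac{t}{2})>1-\rho$ (and the dual $\nu$-bound holds) for all $x$, all $n\geq k$ and all $p$. The decisive step is to let $p\to\infty$ with $n$ and $x$ held fixed: since $f_{n+p}(x)\to f(x)$ in $(V,B)$, the defining property of convergence gives $\mu(f_{n+p}(x)-f(x),\frac{t}{2})\to 1$, so in the triangle inequality
\[
\mu\left(f_n(x)-f(x),\,t\right)\;\geq\;\mu\left(f_{n+p}(x)-f_n(x),\,\frac{t}{2}\right)\ast\mu\left(f_{n+p}(x)-f(x),\,\frac{t}{2}\right)
\]
(using $(iv)$ again for the first factor) the monotonicity of $\ast$ bounds the right side below by $(1-\rho)\ast\mu(f_{n+p}(x)-f(x),\frac{t}{2})$, and the continuity of $\ast$ (property $(ii)$) lets me pass to the limit to get $\mu(f_n(x)-f(x),t)\geq(1-\rho)\ast 1 = 1-\rho > 1-r$, uniformly in $x$ for $n\geq k$; the dual argument with $\diamond$ yields $\nu(f_n(x)-f(x),t)<r$, which is uniform convergence. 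The main obstacle is not the algebra but justifying the $p\to\infty$ passage cleanly — keeping the lower bound uniform in $x$ while the second factor tends to $1$ — together with the completeness of $(V,B)$ needed to guarantee that $f$ exists at all.
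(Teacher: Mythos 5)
Your proof is correct, and its skeleton matches the paper's: the forward direction splits $t=\frac{t}{2}+\frac{t}{2}$ and applies axioms $(v)$ and $(x)$ to $f_{n+p}(x)-f(x)$ and $f(x)-f_n(x)$; the converse first observes that the hypothesis makes $\{f_n(x)\}_n$ Cauchy in $(V,B)$ pointwise, extracts the limit function $f$, and then runs the same $\frac{t}{2}+\frac{t}{2}$ split on $f_n(x)-f_{n+p}(x)$ and $f_{n+p}(x)-f(x)$. Where you diverge is instructive in both places. First, the paper never introduces your auxiliary $\rho$: it leans on the standing idempotency hypothesis $(xii)$ ($a\ast a=a$, $a\diamond a=a$), so $(1-r)\ast(1-r)=1-r$ and $r\diamond r=r$ come for free; your continuity-based choice of $\rho$ with $(1-\rho)\ast(1-\rho)>1-r$ and $\rho\diamond\rho<r$ buys generality, since it makes the criterion valid for an arbitrary continuous $t$-norm and $t$-conorm (e.g.\ the product norm, which is not idempotent), at the cost of one extra quantifier. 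Second, your explicit flagging of completeness of $(V,B)$ identifies a tacit assumption in the paper itself: the paper's proof simply asserts that the Cauchy sequence $\{f_n(x_0)\}_n$ ``is an intuitionistic fuzzy convergent in $(V,B)$'' with no completeness hypothesis stated anywhere, so you have located a genuine gap in the original rather than in your own argument. Third, your handling of the $p\to\infty$ passage is the more careful of the two: the paper asserts $\mu(f_{n+p}(x)-f(x),\frac{t}{2})>1-r$ ``for all $n\geq n_0$ and for all $x\in U$,'' which as written conflates the pointwise convergence $f_{n+p}(x)\to f(x)$ (where the required $p$ depends on $x$) with a uniform bound; since $p$ is a free parameter and the final estimate $\mu(f_n(x)-f(x),t)$ does not involve $p$, the paper's step is repairable exactly as you repair it — by holding $n$ and $x$ fixed, letting $p\to\infty$, and using the continuity of $\ast$ and $\diamond$ to conclude $\mu(f_n(x)-f(x),t)\geq(1-\rho)\ast 1=1-\rho>1-r$ and $\nu(f_n(x)-f(x),t)\leq\rho\diamond 0=\rho<r$ uniformly in $x$. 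In short: same decomposition and same two-stage structure, but your version is both slightly more general and more rigorous at the two points where the paper is loose.
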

\begin{proof}
\textbf{$\Rightarrow\;$part\;:\;}\,Let, \,$\{\,f_n\,\}_n$\,
be uniformly intuitionistic fuzzy convergent on \,$(\,U\,,\,A\,)$\,
 and \,$f$\, be its limit function. Then for any given
\,$r\;\in\;(\,0\,,\,1\,)$\, and \,$t\;>\;0$\, there exist a natural number \,$n_0\;=\;n_0\,(\,r\;,\;t\,)$\, such that for all \,$x\;\in\;U$\,, and \,$\forall\;n\;\geq\;n_0$ ,
\[\hspace{1.5cm}\mu\,\left(\,f_n(x)\;-\;f(x)\;,\;\frac{t}{2}
\,\right)\;>\;1\,-\,r\;\;,\;\;\nu
\,\left(\,f_n(x)\;-\;f(x)\;,\;\frac{t}{2}\,\right)\;<\;r\]
\\$\Rightarrow\;$ For all $\;n\;\geq\;n_0\;$ and
$\;p\;=\;1\,,\,2\,,\,3\,,\;\cdots\;$ and $\;x\;\in\;U\;$, \[\hspace{1.5cm}\mu\,\left(\,f_{n\,+\,p}(x)\;-\;f(x)\;,\;\frac{t}{2}\,\right)
\;>\;1\,-\,r\;\;,\;\;\nu\,\left(\,f_{n\,+\,p}(x)\;-\;f(x)\;,\;
\frac{t}{2}\,\right)\;<\;r\]
\\Now, for all $\;x\;\in\;D\;$ and $\;p\;=\;1\,,\,2\,,\,3\,,
\;\cdots\;,\;$,\, we see that\\
\[\mu\,\left(\,f_{n\,+\,p}\,(x)\;-\;f_n\,(x)\;,\;t\,\right)\hspace{6.2cm}\]
\[\hspace{1.5cm}=\;\mu\,\left(\,f_{n\,+\,p}
\,(x)\;-\;f\,(x)\;+\;f\,(x)\;-\;f_n\,(x)\;\;,\;\;\frac{t}{2}\;
+\;\frac{t}{2}\,\right)\]
\[\hspace{1.0cm}\geq\;\mu\,\left(\,f_{n\,+\,p}\,(x)\;-\;f\,(x)\;
,\;\frac{t}{2}\,\right)\;\ast\;
\mu\,\left(\,f\,(x)\;-\;f_n\,(x)\;,\;\frac{t}{2}\,\right)\]
\[\hspace{1.0cm}=\;\mu\,\left(\,f_{n\,+\,p}\,(x)\;-\;f\;(x)
\;,\;\frac{t}{2}\,\right)\;\ast\;\mu
\,\left(\,f_n\,(x)\,-\,f\,(x)\;,\;\frac{t}{2}\,\right)\]
\[>\;(\,1\;-\;r\,)\;\ast\;(\,1\;-\;r\,)\;\;
=\;\;(\,1\;-\;r\,)
\;\;\;\;\;\;\;\forall\;n\;\geq\;n_0\] and \[\nu\,\left(\,f_{n\,+\,p}\,(x)\;-\;f_n\,(x)\;,\;t\,\right)\hspace{6.2cm}\]
\[\hspace{1.5cm}=\;\nu\,\left(\,f_{n\,+\,p}
\,(x)\;-\;f\,(x)\;+\;f\,(x)\;-\;f_n\,(x)\;\;,\;\;\frac{t}{2}\;
+\;\frac{t}{2}\,\right)\]
\[\hspace{1.7cm}\leq\;\nu\,\left(\,f_{n\,+\,p}\,(x)\;-\;f\,(x)\;
,\;\frac{t}{2}\,\right)\;\diamond\;
\nu\,\left(\,f\,(x)\;-\;f_n\,(x)\;,\;\frac{t}{2}\,\right)\]
\[\hspace{1.6cm}=\;\nu\,\left(\,f_{n\,+\,p}\,(x)\;-\;f\;(x)
\;,\;\frac{t}{2}\,\right)\;\diamond\;\nu
\,\left(\,f_n\,(x)\,-\,f\,(x)\;,\;\frac{t}{2}\,\right)\]
\[<\;r\;\diamond\;r\;\;
=\;\;r \;\;\;\;\;\;\;\forall\;n\;\geq\;n_0 \hspace{3.5cm}\]
\\Hence the $\;\Rightarrow\;$ part.\\\\
\textbf{$\Leftarrow\;$part\;:\;} In this part, we suppose that for any given \,$r\;\in\;(\,0\,,\,1\,)$\, and \,$t\;>\;0$\, there exist a natural number \,$n_0\;=\;n_0\,(\,r\;,\;t\,)$\, such that for all \,$x\;\in\;U$\, and \,$\forall\;n\;\geq\;n_0$
\[\mu\,(\,f_{n\,+\,p}\,(x)\;-\;f_n\,(x)\;,\;t\,)\;>\;1\,-\,r
\;\;,\;\;\nu\,
(\,f_{n\,+\,p}\,(x)\;-\;f_n\,(x)\;,\;t\,)\;<\;r.\] Let
\,$x_0\;\in\;U.$\, Then for \,$\forall\;n\;\geq\;n_0$\, we see that,
\[\mu\,(\,f_{n\,+\,p}\,(x_0)\;-\;f_n\,(x_0)\;,\;t\,)\;>\;1\,-\,r\;\;,\;\;
\nu\,(\,f_{n\,+\,p}\,(x_0)\;-\;f_n\,(x_0)\;,\;t\,)\;<\;r.\]
$\Rightarrow\;\;\{\,f_n(x_0)\,\}_n$\, is an intuitionistic fuzzy cauchy sequence in \,$\left(\,V\,,\,B\,\right).$\\
\;$\Rightarrow\;\;\{\,f_n(x_0)\,\}_n$\, is an intuitionistic fuzzy convergent in \,$\left(\,V\,,\,B\,\right).$ \\
\;$\Rightarrow\;\;\{\,f_n\,\}_n$\, is pointwise intuitionistic fuzzy convergent on $\left(\,U\,,\,A\,\right).$ \\\\
Let, \,$f$\, be the intuitionistic fuzzy limit function of \,$\{\,f_n\,\}_n$\,
on $\left(\,U\,,\,A\,\right).$ Let, \,$r\;\in\;(\,0\,,\,1\,)$\,
and \,$t\;>\;0$.\, Then by the given condition, there exist a natural number \,$n_0\;=\;n_0\,(\,r\,,\,t\,)$\, such that for all \,$x\;\in\;U$\, and \,$p\;=\;1\,,\,2\,,\,3\,,\;\cdots$\, and \,$\forall\;n\;\geq\;n_0$\,
\[\mu\,\left(\,f_{n\,+\,p}\,(x)\;-\;f_n\,(x)\;,\;\frac{t}{2}\,\right)
\;>\;1\,-\,r\;\;,
\;\;\nu\,\left(\,f_{n\,+\,p}\,(x)\;-\;f_n\,(x)\;,\;\frac{t}{2}\,\right)\;<\;r.\]
\\Again since \,$f_n\;\rightarrow\;f$\, as \,$n\;\rightarrow\;\infty$\,
 on \,$\left(\,U\,,\,A\,\right)$\,,\, we see that
 \,$f_{n\,+\,p}\;\rightarrow\;f$\, as \,$n\;\rightarrow\;\infty$ on $\left(\,U\,,\,A\,\right)$\,,\,which implies that
for all \,$n\;\geq\;n_0$\, and for all \,$x\;\in\;U$\,, \[\mu\,\left(\,f_{n\,+\,p}\,(x)\;-\;f\,(x)\;,\;\frac{t}{2}\,\right)
\;>\;1\,-\,r\;\;,\;
\;\nu\;\left(\,f_{n\,+\,p}\,(x)\;-\;f\,(x)\;,\;\frac{t}{2}\,\right)\;<\;r\]
\\Now\,,\, for all $\;x\;\in\;U\;$ we see that\\
\[\mu\,\left(\,f_n\,(x)\;-\;f\,(x)\;,\;t\,\right)\hspace{7.5cm}\]
\[\hspace{2.5cm}=\;\;\mu\,\left(\,f_n\,(x)\;-\;f_{n\,+\,p}\,(x)
\;+\;f_{n\,+\,p}\,(x)\;-\;f\,(x)\;,\;\frac{t}{2}\;+\;\frac{t}{2}\,\right)\]
\[\hspace{2.9cm}\geq\;\mu\,\left(\,f_n\,(x)\;-\;f_{n\,+\,p}\,(x)\;,\;
\frac{t}{2}\,\right)\;\ast\;\mu
\,\left(\,f_{n\,+\,p}\,(x)\;-\;f\,(x)\;,\;\frac{t}{2}\,\right)\]
\[\hspace{1.2cm}>\;(\,1\,-\,r\,)\;\ast\;(\,1\,-\,r\,)\;=\;(\,1\,-\,r\,)\;
\;\;,\;\;\;\;\;\forall\;n\;\geq\;n_0\]
and \[\nu\,\left(\,f_n\,(x)\;-\;f\,(x)\;,\;t\,\right)\hspace{7.5cm}\]
\[\hspace{2.5cm}=\;\;\nu\,\left(\,f_n\,(x)\;-\;f_{n\,+\,p}\,(x)
\;+\;f_{n\,+\,p}\,(x)\;-\;f\,(x)\;,\;\frac{t}{2}\;+\;\frac{t}{2}\,\right)\]
\[\hspace{2.9cm}\leq\;\nu\,\left(\,f_n\,(x)\;-\;f_{n\,+\,p}\,(x)\;,\;
\frac{t}{2}\,\right)\;\diamond\;\nu
\,\left(\,f_{n\,+\,p}\,(x)\;-\;f\,(x)\;,\;\frac{t}{2}\,\right)\]
\[<\;r\;\diamond\;r\;=\;r\;
\;\;,\;\;\;\;\;\forall\;n\;\geq\;n_0 \hspace{2.8cm}\] \\
$\Rightarrow\;\;\{\,f_n\}_n$\, is uniformly
intuitionistic fuzzy convergent on \,$\left(\,U\,,\,A\,\right)$.
\end{proof}
\smallskip
\textbf{Equivalent Statement:\;\;}Let, \,$f_n\;:\;(\,U\,,\,A\,)\;\rightarrow\;(\,V\,,\,B\,)
\;,\;\forall\;n\;\in\;\mathbb{N}$\, be a sequence of functions. Then the sequence \,$\{\,f_n\,\}_n$\, is uniformly intuitionistic fuzzy convergent on \,$\left(\,U\,,\,A\,\right)$\, if and only if for any given
\,$r\;\in\;(\,0\,,\,1\,)$\, and \,$t\;>\;0$\, there exist  a natural number \,$n_0\;=\;n_0\,(\,r\;,\;t\,)$\, such that \,$\forall\;x\;\in\;U$,\[\mu\,(\,f_{n}\,(x)\;-\;f_m(x)\;,\;t\,)\;>\;1\,-\,r\;\;,
\;\;\nu\,(\,f_{n}\,(x)\,-\,f_m\,(x)\;,\;t\,)\;<\;r\;\;,
\;\;\forall\;\;n\,,\,m\;\geq\;n_0.\]

\begin{example}
In the example $(\,4.3\,)$, though we have seen that \,$\{\,f_n\,\}_n$\,
is uniformly intuitionistic fuzzy convergent on \,$[\,0\,,\,a\,]$\,,\,
where \,$a\;\in\;(\,0\,,\,1\,)$\, and \,$f_n\,(x)\;=\;x^n$\,,\, again, we will
verify it by using the above theorem .
Let, \,$r\;\in\;(\,0\,,\,1\,)$\, and \,$t\;>\;0$.\,
Again let, \,$m\,,\,n\;\in\;\mathbb{N}$\, such that \,$m\;<\;n$\,.\,
Now ,
\[\mu\,(\,f_n\,(x)\;-\;f_m\,(x)\;,\;t\,)\;>\;1\,-\,r\;\;,\;\;\;\nu\,
(\,f_n\,(x)\;-\;f_m\,(x)\;,\;t\,)\;<\;r\]
\[\hspace{2.0cm}\Rightarrow\;\;\;\mu\,(\,x^n\;-\;x^m\;,\;t\,)\;>\;
1\,-\,r\;\;,\;\;\;\nu\,(\,x^n\;-\;x^m\;,
\;t\,)\;<\;r\] \[\Rightarrow\;\;\mid\;x^n\;-\;x^m\;\mid\;<
\;\frac{r\,t}{(1\,-\,r)} \;.\hspace{3.2cm}\]
\\Since,
$\mathop {\sup }\limits_{x\; \in \;[\,0\;,\;a\,]}
\,\left| {\,x^{\,n} \; - \;x^{\,m} \,} \right|\;\,
= \;\,2\,a^{\,m} \;\,,\;\,m\;\, < \;\,n$
 we have \,,\, $2\;a^m\;<\;\frac{r\;t}{(\,1 \,-\,r\,)}$ \,,\,
 which implies that\,
$m\;>\; \left[ {\;\frac{{\log \;2\;\left( {\,\frac{{1\;
- \;r}}{{r\;t}}\,} \right)}}{{\log \,\left( {\,\frac{1}{a}\,}
\right)}}\;} \right]$
\,Let \,,\, $\;\;k\;=\;\left[ {\;\frac{{\log \;2\;\left( {\,\frac{{1\;
- \;r}}{{r\;t}}\,} \right)}}{{\log \,\left( {\,\frac{1}{a}\,}
\right)}}\;} \right]\;+\;1\;.$ \,Thus, we see that for given
\,$r\;\in\;(\,0\,,\,1\,)$\, and \,$t\;>\;0$\,,\, there exist a natural number \,$k\;=\;k\,(\,r\,,\,t\,)$\, such that \,$\forall\;x\;\in\;[\,0\,,\,a\,]\;,\;a\;\in\;(\,0\,,\,1\,)$\, and \,$\forall\;n\;>\;m\;\geq\;k$ \[\mu\,(\,f_n\,(x)\;-\;f_m\,(x)\;,\;t\,)
\;>\;1\,-\,r\;\;,\;\;\nu\,(\,f_n\,(x)\;-\;f_m\,(x)\;,\;t\;)\;<\;r\;.\]
This completes the verification .
\end{example}
\smallskip
\begin{theorem}
\textbf{(Uniform Limit Theorem)}: Let, \,$(\,U\,,\,A\,)$\,
and \,$(\,V\,,\,B\,)$\, be two intuitionistic fuzzy normed
linear space satisfying the condition \,$(xii)$\,. Also, let \,$f_n\;:\;(\,U\,,\,A\,)\;\rightarrow\;(\,V\,,\,B\,)\;\;,
\;\;\forall\;n\;\in\;\mathbb{N}$\, and \,$f_n$\,
be intuitionistic fuzzy continuous on \,$(\,U\,,\,A\,)$\,.\,
If \,$\{\,f_n\,\}_n$\, be uniformly intuitionistic fuzzy
convergent on \,$(\,U\,,\,A\,)$\, to a function \,$f$\, then \,$f$\,
is intuitionistic fuzzy continuous on \,$(\,U\,,\,A\,)$\,.
\end{theorem}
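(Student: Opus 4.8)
The plan is to verify intuitionistic fuzzy continuity of $f$ directly from the pointwise $\epsilon$--$\delta$ definition at an arbitrary point $x_0 \in U$, using the fuzzy analogue of the classical $\epsilon/3$ argument. Fix $\epsilon > 0$ and $\alpha \in (0,1)$. Writing $f(x)-f(x_0) = \bigl(f(x)-f_n(x)\bigr) + \bigl(f_n(x)-f_n(x_0)\bigr) + \bigl(f_n(x_0)-f(x_0)\bigr)$ and splitting the parameter as $\epsilon = \tfrac{\epsilon}{3}+\tfrac{\epsilon}{3}+\tfrac{\epsilon}{3}$, the triangle--type axioms $(v)$ and $(x)$ of the intuitionistic fuzzy norm give
\[ \mu_V(f(x)-f(x_0),\epsilon)\ \geq\ \mu_V\!\Bigl(f(x)-f_n(x),\tfrac{\epsilon}{3}\Bigr)\ast \mu_V\!\Bigl(f_n(x)-f_n(x_0),\tfrac{\epsilon}{3}\Bigr)\ast \mu_V\!\Bigl(f_n(x_0)-f(x_0),\tfrac{\epsilon}{3}\Bigr), \]
together with the dual inequality for $\nu_V$ in which $\ast$ is replaced by $\diamond$ and the inequality is reversed.

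First I would choose a slack parameter $r \in (0,\alpha)$, say $r = \alpha/2$. Since $\{f_n\}_n$ is uniformly intuitionistic fuzzy convergent to $f$, applying the definition with radius $r$ and parameter $\tfrac{\epsilon}{3}$ yields an $n_0$ such that $\mu_V\bigl(f_n(x)-f(x),\tfrac{\epsilon}{3}\bigr) > 1-r$ and $\nu_V\bigl(f_n(x)-f(x),\tfrac{\epsilon}{3}\bigr) < r$ for all $x \in U$ and all $n \geq n_0$; this controls the first and third terms of the decomposition once one uses the symmetries $\mu_V(-z,t)=\mu_V(z,t)$ and $\nu_V(-z,t)=\nu_V(z,t)$, which follow from axioms $(iv)$ and $(ix)$ taken with $c=-1$. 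Next, fixing $n = n_0$ and invoking the intuitionistic fuzzy continuity of the single map $f_{n_0}$ at $x_0$ with the same $r$ and $\tfrac{\epsilon}{3}$, I obtain $\delta > 0$ and $\beta \in (0,1)$ for which $\mu_U(x-x_0,\delta) > 1-\beta$ forces $\mu_V\bigl(f_{n_0}(x)-f_{n_0}(x_0),\tfrac{\epsilon}{3}\bigr) > 1-r$, and $\nu_U(x-x_0,\delta) < \beta$ forces $\nu_V\bigl(f_{n_0}(x)-f_{n_0}(x_0),\tfrac{\epsilon}{3}\bigr) < r$.

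With these choices in place, whenever $\mu_U(x-x_0,\delta) > 1-\beta$ and $\nu_U(x-x_0,\delta) < \beta$, all three membership factors exceed $1-r$ and all three non--membership factors lie below $r$. Combining them through the displayed inequalities and using the idempotency hypothesis $(xii)$, namely $(1-r)\ast(1-r)\ast(1-r) = 1-r$ and $r \diamond r \diamond r = r$, gives $\mu_V(f(x)-f(x_0),\epsilon) \geq 1-r > 1-\alpha$ and $\nu_V(f(x)-f(x_0),\epsilon) \leq r < \alpha$. This is exactly the required conclusion, and since $x_0 \in U$ was arbitrary, $f$ is intuitionistic fuzzy continuous on $U$.

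The step I expect to carry the weight is the simultaneous recombination of the three pieces: the decomposition forces the $t$--argument to be split into $\tfrac{\epsilon}{3}$ three times, and the only way to fuse the three estimates back together without degrading them is the idempotency of $\ast$ and $\diamond$ supplied by $(xii)$ — which is precisely why that hypothesis appears in the statement. The one genuine subtlety is the book-keeping of strict versus non-strict inequalities, since monotonicity of the $t$-norm only returns $\geq 1-r$ rather than a strict bound after recombination; I build in the needed room by insisting on $r < \alpha$ from the outset, so that $\geq 1-r$ upgrades to the strict $> 1-\alpha$ demanded by the definition. Everything else is a routine application of the norm axioms.
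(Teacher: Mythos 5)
Your proposal is correct and follows essentially the same route as the paper's own proof: the same three-term decomposition of $f(x)-f(x_0)$ with the $t$-parameter split into thirds, uniform convergence controlling the two outer terms, continuity of a single $f_{n_0}$ controlling the middle one, and the idempotency hypothesis $(xii)$ fusing the three estimates. Your extra slack $r=\alpha/2$ to upgrade $\geq 1-r$ to a strict $>1-\alpha$ is a careful refinement the paper glosses over (it asserts strictness directly, which is in fact harmless since $(xii)$ forces $\ast=\min$ and $\diamond=\max$), but it does not change the argument.
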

\begin{proof}
Let \,$\{\,f_{\,n}\,\}_{\,n}$\, be uniformly intuitionistic fuzzy
convergent to the function \,$f$\, on \,$(\,U\,,\,A\,)$ .\,Then for any given
\,$r\;\in\;(\,0\,,\,1\,)$\, and \,$t\;>\;0$, there exists a natural number
\,$k \;=\; k\,(\,r \,,\, t\,)$\, such that for all \,$x \;\in\;U$\, and for all
\,$n\;\geq\;k$\,, \[ \mu_{\,V}\,\left(\,f_{\,n}\,( x ) \;-\; f( x ) \,,\,\frac{t}{3}\right)
\;\;>\;\; 1 \;-\; r \;\;,\;\; \nu_{\,V}\,\left(\,f_{\,n}\,( x ) \;-\;
f( x ) \,,\,\frac{t}{3}\right)
\;\;<\;\;  r \] Thus, for all \,$x \;\in\;U$ ,
\[ \mu_{\,V}\,\left(\,f_{\,k}\,( x ) \;-\; f( x ) \,,\,\frac{t}{3}\right)
\;\;>\;\; 1 \;-\; r \;\;,\;\; \nu_{\,V}\,\left(\,f_{\,k}\,( x ) \;-\;
f( x ) \,,\,\frac{t}{3}\right)
\;\;<\;\;  r \] Let \,$x_{\,0}$\, be an arbitrary but fixed point of
\,$U$\,. Then we have \[ \mu_{\,V}\,\left(\,f_{\,k}\,( x_{\,0} ) \;-\; f( x_{\,0} ) \,,
\,\frac{t}{3}\right)
\;\;>\;\; 1 \;-\; r \;\;,\;\; \nu_{\,V}\,\left(\,f_{\,k}\,( x_{\,0} ) \;-\;
f( x_{\,0} ) \,,\,\frac{t}{3}\right)
\;\;<\;\;  r \] Since each \,$f_{\,n}$\, is intuitionistic fuzzy continuous
on \,$U$ ,\,$f_{\,k}$\, is intuitionistic fuzzy continuous at \,$x_{\,0}$ .
So, for any given \,$r\;\in\;(\,0\,,\,1\,)$\, and \,$t\;>\;0$, \,there exist
\,$\delta \;=\; \delta\,\left(\,r \,,\, \frac{t}{3}\,\right) \;>\; 0$ \,,\,
$\beta \;=\; \beta\,\left(\,r \,,\, \frac{t}{3}\,\right) \;\in\;(\,0\,,\,1\,)$\,
such that \[ \mu_{\,U}\,(\,x \,-\,x_{\,0} \;,\; \delta\,) \;\,>\;\,
1 \,-\, \beta \;\,\Rightarrow
\;\,\mu_{\,V}\,\left(\,f_{\,k}(x) \,-\,f_{\,k}(\,x_{\,0}\,) \;,\; \frac{t}{3}
\,\right) \;\,>\;\,1 \,-\, r \; ,\]  \[ \nu_{\,U}\,(\,x \,-\,x_{\,0} \;,\; \delta\,) \;\,<\;\,\beta \;\,\Rightarrow
\;\,\nu_{\,V}\,\left(\,f_{\,k}(x) \,-\,f_{\,k}(\,x_{\,0}\,) \;,\; \frac{t}{3}
\,\right) \;\,<\;\, r \hspace{0.5cm}\]
Thus, we see that for \,$\mu_{\,U}\,(\,x \,-\,x_{\,0} \;,\; \delta\,) \;\,>\;\,
1 \,-\, \beta $ , \[ \mu_{\,V}\,\left(\,f(x) \,-\, f(\,x_{\,0}\,) \;,\; t\,\right)
\;=\;\, \mu_{\,V}\,\left(\,f(x) \,-\, f_{\,k}(x) \,+\, f_{\,k}(x) \,-\,
f_{\,k}(\,x_{\,0}\,) \,+\, f_{\,k}(\,x_{\,0}\,) \,-\,
f(\,x_{\,0}\,) \;,\; t\,\right) \] \[\hspace{2.5cm}\geq\;\; \mu_{\,V}\,
\left(\,f(x) \,-\, f_{\,k}(x) \;,\; \frac{t}{3}\,\right) \;\ast\;
\mu_{\,V}\,\left(\,f_{\,k}(x) \,-\, f_{\,k}(\,x_{\,0}\,) \;,\;
\frac{t}{3}\,\right)\] \[\hspace{7.5cm} \;\ast\; \mu_{\,V}\,
\left(\,f_{\,k}(\,x_{\,0}\,)
\,-\, f(\,x_{\,0}\,) \;,\; \frac{t}{3}\,\right)\] \[>\;\;
(\,1 \,-\, r\,) \;\ast\; (\,1 \,-\, r\,) \;\ast\; (\,1 \,-\, r\,)
\;\,=\;\, 1 \,-\, r\] Thus, we have \[ \mu_{\,U}\,(\,x \,-\,x_{\,0}
\;,\; \delta\,) \;\,>\;\,1 \,-\, \beta \;\,\Rightarrow\;\,
\mu_{\,V}\,\left(\,f(x) \,-\, f(\,x_{\,0}\,) \;,\; t\,\right)
\;\,>\;\, 1 \,-\, r\;\;\;\cdots\;\;\;(5)\] Again , for
\,$ \nu_{\,U}\,(\,x \,-\,x_{\,0}
\;,\; \delta\,) \;\,<\;\,\beta $ , \[ \nu_{\,V}\,\left(\,f(x)
\,-\, f(\,x_{\,0}\,) \;,\; t\,\right)
\;=\;\, \nu_{\,V}\,\left(\,f(x) \,-\, f_{\,k}(x) \,+\, f_{\,k}(x) \,-\,
f_{\,k}(\,x_{\,0}\,) \,+\, f_{\,k}(\,x_{\,0}\,) \,-\,
f(\,x_{\,0}\,) \;,\; t\,\right) \] \[\hspace{4.5cm}\leq\;\; \nu_{\,V}\,
\left(\,f(x) \,-\, f_{\,k}(x) \;,\; \frac{t}{3}\,\right) \;\diamond\;
\nu_{\,V}\,\left(\,f_{\,k}(x) \,-\, f_{\,k}(\,x_{\,0}\,) \;,\;
\frac{t}{3}\,\right)\] \[\hspace{7.5cm} \;\diamond\; \nu_{\,V}\,
\left(\,f_{\,k}(\,x_{\,0}\,)
\,-\, f(\,x_{\,0}\,) \;,\; \frac{t}{3}\,\right)\] \[<\;\;
r \;\diamond\; r \;\diamond\; r
\;\,=\;\, r \hspace{1.5cm}\] Hence, we have \[ \nu_{\,U}\,(\,x \,-\,x_{\,0}
\;,\; \delta\,) \;\,<\;\,\beta \;\,\Rightarrow\;\,
\nu_{\,V}\,\left(\,f(x) \,-\, f(\,x_{\,0}\,) \;,\; t\,\right)
\;\,<\;\, r \;\;\;\cdots\;\;\;(6)\] Thus , from $( 5 )$ and $(6)$ it follows
that \,$f$\, is intuitionistic fuzzy continuous on \, \,$(\,U\,,\,A\,)$ .
\end{proof}
\smallskip
\begin{note}
\; The converse of the above theorem is not necessarily true. For example, we consider the sequence of functions of example 4.3. It is obvious that each $\;f_n\;$ is sequentially intuitionistic fuzzy continuous on $\;(0,1)\;$ and hence is intuitionistic fuzzy continuous on $\;(0,1)\;$. Also, the limit function $\;f\;$ is intuitionistic fuzzy continuous on $\;(0,1)\;$, but the intuitionistic fuzzy convergence is not uniformly intuitionistic fuzzy convergent on $\;(0,1)\;.$
\end{note}
\medskip
\section{\textbf{Open Problem :}}
One can develop the concept of differentiation and Riemann integration in an
intuitionistic fuzzy normed linear space and then verify whether the term by term
differentiation and integration are valid or not for a sequence of function in an
intuitionistic fuzzy normed linear space .
\bigskip


\begin{thebibliography}{0}
\bibitem{Atanassov} Atanassov, K. \textit{ Intuitionistic fuzzy sets} , Fuzzy Sets and
Systems 20 $(\,1986\,)$ 87 - 96.\\
\bibitem{Bag1} Bag, T. and Samanta, S.K. ,
\textit{ Finite Dimensional Fuzzy Normed Linear Spaces}, The Journal
of Fuzzy Mathematics Vol. 11 $(\,2003\,)$ 687 - 705.\\
\bibitem{Bag2} Bag, T. and Samanta, S.K. ,
\textit{Fuzzy bounded linear operators} , Fuzzy Sets and Systems 151
$(\,2005\,)$ 513 - 547.
\bibitem{Shih-chuan} Cheng S.C. and Mordeson J.N. , \textit{Fuzzy Linear Operators and Fuzzy Normed Linear
Spaces} , Bull. Cal. Math. Soc.86 $(\,1994\,)$ 429 - 436.\\
\bibitem{Felbin1} Felbin c. ,
\textit{The completion of fuzzy normed linear space}, Journal of mathmatical analysis and application 174(2) $(\,1993\,)$ 428-440.\\
\bibitem{Felbin2} Felbin c. ,
\textit{Finite dimentional fuzzy normed linear space}, Journal of analysis 7 $(\,1999\,)$ 117-131.\\
\bibitem{Samanta} T. K. Samanta and Iqbal H. Jebril  ,
\textit{Finite dimentional intuitionistic fuzzy normed linear space},
Int. J. Open Problems Compt. Math. ( accepted ) .\\
\bibitem{Iqbal} Iqbal H. Jebril and Ra'ed Hatamleh ,
\textit{Random n - Normed Linear Space} , Int. J. Open Problems
Compt. Math. Vol. 2 , No. 3 , September 2009 pp 489 - 495. \\
\bibitem{Iqbal1} Iqbal H. Jebril and T. K. Samanta,
\textit{Anti fuzzy normed linear space}, Int. J. Open Problems
Compt. Math. ( accepted )\\
\bibitem{Schweizer} Schweizer B. , Sklar A. ,
\textit{Statistical metric space}, Pacific journal of mathhematics 10 $(\,1960\,)$ 314-334.\\
\bibitem{Vijayabalaji} Vijayabalaji S. , Thillaigovindan N. , Jun
Y.B. \textit{Intuitionistic Fuzzy n-normed linear space} , Bull.
Korean Math. Soc. 44 $(\,2007\,)$ 291 - 308.\\
\bibitem{zadeh} Zadeh L.A.
\textit{Fuzzy sets}, Information and control 8 $(\,1965\,)$ 338-353.\\
\end{thebibliography}
\end{document}